\documentclass[final]{siamltex}
\usepackage{amsmath,bm}
\usepackage{algorithm2e}
\usepackage{amsfonts}
\usepackage{graphicx}
\DeclareMathAlphabet\mathpzc{OT1}{pzc}{m}{it}
\let\mathcal=\mathpzc

\let \la = \leftarrow

\newcommand\partialderiv[3][]{\frac{\partial^{#1}#2}{\partial{#3}^{#1}}}

\let\trueiiint=\iiint
\def\iiint{\mathop{\textstyle\trueiiint}\limits}
\def\intinfty{\int\limits_{\!\!-\infty\,\,}^{\,\,\infty\!\!}\kern-0.0em}
\def\iintinfty{\mathop{\int\!\!\int}\limits_{\!\!-\infty\,\,}^{\,\,\infty\!\!}\kern-0.0em}
\def\iiintinfty{\mathop{\int\!\!\int\!\!\int}\limits_{\!\!-\infty\,\,}^{\,\,\infty\!\!}\kern-0.0em}

\let\<=\langle
\let\>=\rangle
\let\^=\hat
\def\~#1{{\-ox{\sf#1}}}
\def\N{{\mathbb N}}
\def\R{{\mathbb R}}
\let\-=\mathbf

\def\circ{\ifmmode\mathchar"220E\else$\mathchar"220E$\fi}
\def\@#1{{\cal #1}}

\title{An adaptive independence sampler MCMC algorithm for infinite-dimensional Bayesian inferences\thanks{The work was  supported by the NSFC, under grant 11301337.}}

\author{Zhe Feng\footnotemark[2]
        \and Jinglai Li\footnotemark[3]}

\begin{document}

\maketitle

\renewcommand{\thefootnote}{\fnsymbol{footnote}}

\footnotetext[2]{Department of Mathematics, Zhiyuan College,   
Shanghai Jiao Tong University, 800 Dongchuan Rd, Shanghai 200240, China, (sjtufz@sjtu.edu.cn).}
\footnotetext[3]{Corresponding Author, Institute of Natural Sciences, Department of Mathematics, and 
the MOE Key Laboratory of Scientific and Engineering Computing, 
Shanghai Jiao Tong University, 800 Dongchuan Rd, Shanghai 200240, China, (jinglaili@sjtu.edu.cn).}

\renewcommand{\thefootnote}{\arabic{footnote}}

\begin{abstract}
Many scientific and engineering problems require to perform Bayesian inferences in function spaces, in which the unknowns are of infinite dimension. 
In such problems, many standard  Markov Chain Monte Carlo (MCMC) algorithms  become arbitrary slow under the mesh refinement, which is referred to as being dimension dependent.  
In this work we develop an independence sampler based MCMC method for the infinite dimensional Bayesian inferences.  
We represent the proposal distribution as a mixture of a finite number of specially parametrized Gaussian measures.  
We show that under the chosen parametrization, the resulting MCMC algorithm is dimension independent. 
We also design an efficient adaptive algorithm to adjust the parameter values of the mixtures from the previous samples. 
Finally we provide numerical examples to demonstrate the efficiency and robustness of the proposed method, even for problems
with multimodal posterior distributions.  
\end{abstract}

\begin{keywords}
{adaptive Markov chain Monte Carlo},
{Bayesian inference},
{Gaussian mixture},
{independence sampler},
{inverse problem}

\end{keywords}
\pagestyle{myheadings}
\thispagestyle{plain}
\markboth{Z. FENG AND J. LI}{ADAPTIVE MCMC FOR INFINITE DIMENSIONAL  INFERENCE}

\section{Introduction}\label{s:intro}
Nonparametric Bayesian inferences have applications in many scientific problems, ranging from regression~\cite{hjort2010bayesian} 
to inverse problems~\cite{kaipio2005statistical,stuart2010inverse}.
In those problems the unknown that we want to infer is of infinite-dimension, for example, a function of space or time. 
In many practical problems, the posterior distributions do not admit a closed form and need to be computed numerically.
Specifically one first represents the unknown function with a finite-dimensional parametrization, for example, by discretizing the function on a pre-determined mesh grid, and then solve the resulting finite dimensional inference problem with
the  Markov Chain Monte Carlo (MCMC) simulations. 
It has been known that standard MCMC algorithms, such as the random walk Metropolis-Hastings (RWMH),  can become arbitrarily slow as the discretization mesh of the unknown is refined~\cite{roberts1997weak,roberts2001optimal,beskos2009optimal,mattingly2012diffusion}. 
That is, the mixing time of an algorithm can increase to infinity as the dimension of the discretized parameter approaches to infinity,
and in this case the algorithm is said to be \emph{dimension-dependent}.
To this end, a very interesting line of research is to develop \emph{dimension-independent} MCMC algorithms by requiring the algorithms to be 
well-defined in the function spaces.
In particular, a family of dimension-independent MCMC algorithms were presented in \cite{cotter2013mcmc} by constructing a preconditioned Crank-Nicolson (pCN)
discretization of a stochastic partial differential equation (SPDE) that preserves the reference measure. 

Just like its finite dimensional counterparts, the sampling efficiency of the infinite dimensional MCMC can be improved by incorporating 
the data information in the proposal design.   
One way of doing so is to guide the proposal with the local derivative information of the likelihood function.
Methods in this category include: the stochastic Newton MCMC~\cite{martin2012stochastic,petra2014computational},
the operator-weighted proposal method~\cite{law2014proposals}, the infinite-dimensional Metropolis-adjusted Langevin
algorithm~(MALA)~\cite{beskos2008mcmc,Beskos201446},
and the dimension-independent likelihood-informed~(DILI) MCMC~\cite{cui2014dimension},  just to name a few.  
An alternative type of methods to improve the efficiency with the data information is the adaptive MCMC (c.f. ~\cite{andrieu2008tutorial,atchade2009adaptive,roberts2009examples} and 
the references therein), which automatically adjusts the proposal as the algorithm proceeds. While the first type of approaches utilize the gradient or the Hessian of the likelihood function
to accelerate the computation, the adaptive methods do not require such information, which makes it particularly convenient for problems with black-box models.

In this paper we propose an adaptive MCMC algorithm with independence sampler (IS)~\cite{tierney1994markov} for infinite dimensional problems. 
IS, also known as the independent Metropolis-Hastings~(MH)~\cite{holden2009adaptive}, or the Metropolized independent sampling~\cite{liu1996metropolized},
 is an alternative to the popular RWMH algorithm, which proposes from a stationary distribution, i.e., one that is independent of the present position. 
The design principle for the independence sampler method is rather straightforward: loosely speaking, one should choose the proposal distribution to be as close to the target distribution as possible.
The basic idea here is to represent the proposal distribution with a mixture of a finite number of parametrized Gaussian measures,
and optimize the parameters as the algorithm proceeds. Our specific parametrization ensures the algorithm to be well defined in function spaces and therefore dimension independent. 
As is mentioned earlier, a major advantage of the proposed method is that it can propose efficiently without using the derivative information
of the likelihood function. 
Moreover as is demonstrated by our numerical examples in Section~5, our method performs well for \emph{multimodal} posterior distributions which can be challenging for many existing algorithms.

The rest of the paper is organized as the following. In section~\ref{s:setup} we introduce the basic setup of the infinite dimensional Bayesian inference problem. 
In section~\ref{s:mixture} we present the Gaussian mixture based independence sampler
and show that it is well-defined in the function space. Section~4 is devoted to a detailed description of the complete algorithm  and
 and section~5 provides several numerical examples of the proposed method.  
\section{Problem setup}\label{s:setup}
We consider a separable {Hilbert} space $X$ with inner product $\<\cdot,\cdot\>_X$.
 Our goal is to estimate the unknown  $u\in X$ from data $y\in Y$ where $Y$ is the data space and $y$ is related to $u$ via the likelihood function 
\[L(u,y) = \frac1Z\exp(-\Phi^y(u)),\]
where $Z$ is a normalization constant.
In what follows, without causing any ambiguity, we shall drop the superscript $y$ in $\Phi^y$ for simplicity. 
In this work we require the functional $\Phi$ satisfies the Assumptions (6.1) in \cite{cotter2013mcmc}, i.e.,
\begin{description}
\item{(a)} there exists $q>0$, $Q>0$ such that, for all $u\in X$,
\[0\leq \Phi(u)\leq Q(1+\|u\|_X^q);\]
\item{(b)} for every $r>0$ there is $Q_r>0$ such that, for all $u,\,v\in X$ with \\
$\max\{\|u\|_X,\|v\|_X\}<r$,
\[|\Phi(u)-\Phi(v)|\leq Q_r \|u-v\|_X.\]
\end{description}
We do not have any restrictions on the space $Y$.

In the Bayesian inference we assume that the prior $\mu_0$ of $u$, is a  (without loss of generality)~zero-mean Gaussian measure defined on $X$ with covariance operator $C_0$,
i.e. $\mu_0 = N(0,C_0)$. 
Note that $C_0$ is symmetric positive and of trace class.
The range of $C_0^{\frac12}$,
\[E = \{u = C_0^{\frac12} x\, |\, x\in X\}\subset X,\]
which is a Hilbert space equipped with inner product~\cite{da2006introduction},
\[\<\cdot,\cdot\>_E = \<C_0^{-\frac12}\cdot,C_0^{-\frac12}\cdot\>_X ,\]
is called the Cameron-Martin space of measure $\mu_0$. 
In this setting, the posterior measure $\mu^y$ of $u$ conditional on data $y$
is provided by the Radon-Nikodym derivative:
\begin{equation} \frac{d\mu^y}{d\mu_{0}}(u) =\frac1Z\exp(-\Phi(u)), \label{e:bayes}
\end{equation}
which can be interpreted as the Bayes' rule in the infinite dimensional setting.
Our goal is to draw samples from the posterior $\mu^y$ with MCMC algorithms. 

Note that the definition of the maximum a posteriori (MAP) estimator in finite dimensional spaces does not apply here, as the measures $\mu^y$ and $\mu_0$ are not absolutely
continuously with respect to the Lebesgue measure; instead,
the MAP estimator in $X$ is defined as the minimizer of the Onsager-Machlup functional (OMF)~\cite{dashti_map_2013,Li20151}:
\begin{equation}
 I(u) := \Phi(u)+\frac12\|u\|^2_E,\label{e:OM}
\end{equation}
over the Cameron-Martin space $E$. In Section~\ref{s:examples}, we shall use OMF as an indicating quantity to compare the performance of various MCMC algorithms.
Finally we quote the following lemma~(\cite{da2006introduction}, Chapter~1), which will be useful in next section:
\begin{lemma}
There exists a complete orthonormal \label{lm:eigens}
basis $\{e_k\}_{k\in\N}$ on $X$ and a sequence of non-negative numbers $\{\alpha_k\}_{k\in\N}$
such that ${C_0} e_k = \alpha_k e_k$ and $\sum_{k=1}^\infty \alpha_k <\infty$, i.e., 
 $\{e_k\}_{k\in\N}$ and $\{\alpha_k\}_{k\in\N}$ being the eigenfunctions and eigenvalues of $C_0$ respectively.
\end{lemma}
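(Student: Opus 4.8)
The plan is to derive this from the spectral theorem for compact self-adjoint operators, exploiting the three structural properties of $C_0$ already recorded in the setup: it is symmetric, positive, and of trace class. First I would establish that $C_0$ is compact. A trace-class operator on a separable Hilbert space is automatically Hilbert--Schmidt and therefore compact; equivalently, a positive operator with finite trace is the operator-norm limit of its finite-rank spectral truncations, which again forces compactness. This step is what makes the spectral machinery applicable.

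Second, I would invoke the Hilbert--Schmidt spectral theorem: every compact self-adjoint operator on a separable Hilbert space $X$ admits a complete orthonormal basis $\{e_k\}_{k\in\N}$ consisting of its eigenvectors, with real eigenvalues $\{\alpha_k\}_{k\in\N}$ whose only possible accumulation point is $0$. This directly supplies both the orthonormal basis and the eigenvalue sequence asserted in the statement, together with the eigenrelation $C_0 e_k = \alpha_k e_k$.

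Third, I would verify the sign and summability claims, which are now immediate. Positivity of $C_0$ gives $\alpha_k = \<C_0 e_k, e_k\>_X \geq 0$ for every $k$, since $\|e_k\|_X = 1$. The trace-class hypothesis is, by definition, the statement that $\sum_{k} \<C_0 e_k, e_k\>_X = \sum_k \alpha_k$ is finite; because the trace is independent of the chosen orthonormal basis, this quantity is well-defined and equals the trace of $C_0$, yielding $\sum_{k=1}^\infty \alpha_k < \infty$.

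The main obstacle here is essentially nonexistent, since the result is the classical spectral decomposition quoted from da Prato--Zabczyk. The only point deserving care is the passage from ``trace class'' to ``compact,'' which legitimizes the use of the Hilbert--Schmidt theorem; once compactness is secured, the orthonormal eigenbasis, the non-negativity of the eigenvalues, and their summability follow directly from self-adjointness, positivity, and the definition of the trace, respectively.
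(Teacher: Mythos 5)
Your proof is correct. Note that the paper does not prove this lemma at all---it simply quotes it from da Prato (Chapter 1)---and your argument is exactly the classical one underlying that citation: trace class $\Rightarrow$ Hilbert--Schmidt $\Rightarrow$ compact, then the spectral theorem for compact self-adjoint operators on a separable Hilbert space yields the complete orthonormal eigenbasis, with positivity giving $\alpha_k = \<C_0 e_k, e_k\>_X \geq 0$ and the trace-class property giving $\sum_{k=1}^\infty \alpha_k < \infty$. The one point you rightly flag, passing from trace class to compact so the spectral theorem applies, is handled correctly; the only minor detail left implicit is that eigenvectors spanning $\ker C_0$ (eigenvalue $0$) must be adjoined to those of the nonzero spectrum to obtain a \emph{complete} basis, which is routine since $X$ is separable.
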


Without loss of generality, we assume that the eigenvalues $\{\alpha_k\}_{k=1}^\infty$ are in a descending order.

\section{Gaussian mixture based independence sampler} \label{s:mixture}
In this section, we present our Gaussian mixture based independence sampler and show that it is well-defined in the function space.

\subsection{Independence sampler MCMC}
We start by briefly reviewing the independence sampler MCMC algorithm. 
Given a proposal distribution $\mu$, 
we define measures 
\begin{gather*}
\nu(du,du') = \mu(du') \mu^y(du),\\
\nu^\dagger(du,du') = \mu(du) \mu^y(du'),
\end{gather*}
  on the product space $X\times X$.
When $\nu^\dagger$ is absolute continuous with respect to $\nu$, we can define the acceptance probability~\cite{tierney1998note}
\begin{equation} 
A(u,u') = \min
\left \{1, 
\frac{d\nu^\dagger}{d\nu}(u,u')
 \right \},
\label{acceptance} 
\end{equation}
where 
\begin{equation}
\frac{d\nu^\dagger}{d\nu}(u,u') = \frac{d\mu^y}{d\mu}(u') \frac{d\mu}{d\mu^y}(u).\label{e:accept}
\end{equation}
The IS MCMC in a function space proceeds as follows in each iteration:
\begin{enumerate}
\item Draw a sample $u_\mathrm{proposed}$ from the proposal $\mu$.
\item Let $u_\mathrm{next} = u_\mathrm{proposed}$ with probability $A(u_\mathrm{current},u_\mathrm{proposed})$ and\\ $u_\mathrm{next} = u_\mathrm{current}$ with probability $1-A(u_\mathrm{current},u_\mathrm{proposed})$.
\end{enumerate}

We reinstate that the function space IS algorithm  is well-defined if and only if  $\nu^\dagger$ is absolutely continuous with respect to $\nu$,
which requires that $\mu$ and $\mu^y$ are equivalent to each other. 
Since $\mu^y$ and $\mu_0$ are equivalent, it suffices to require $\mu$ and $\mu_0$ to be equivalent. 
Interestingly, the pCN scheme with a specific choice of parameter values yields a dimension-independent IS whose proposal distribution is simply the prior.   
Despite its dimension-independence property, simply proposing according to the prior is inefficient when the data is highly informative, i.e., the posterior being far apart from the prior. Next we shall introduce a more efficient proposal measure than the prior that is to be used in IS MCMC algorithms.

\subsection{Gaussian mixture proposals}
In finite dimensional Bayesian inference problems, Gaussian mixture (GM) distributions~\cite{mclachlan2004finite} are often used as the IS proposal distributions for their flexibility and convenience to draw samples from. 
We now extend the use of GM to the infinite dimensional setting.
Let $\{\mu_j\}_{j=1}^J$ be a set of Gaussian measures on $X$ with $\mu_j = N(m_j,C_j)$ for $j=1...J$,
and we define the Gaussian mixture proposal as
\begin{equation}
\mu(dx) = \sum_{j=1}^J w_j\mu_j (dx) \label{e:mu}
\end{equation}
where $\{w_j\}_{j=1}^J$ are the mixing weights with $\sum_{j=1}^J w_j =1$.
It should be clear that $\mu$ is equivalent to $\mu_0$ as long as each $\mu_j$ is equivalent to $\mu_0$,  
and moreover the Radon-Nikodym derivative of $\mu$ to $\mu_0$ is 
\begin{equation}
\frac{d\mu}{d\mu_{0}}(u)=\sum_{j=1}^{J}w_{j}\frac{d\mu_{j}}{d\mu_{0}}(u).\label{e:dmudmu0}
\end{equation}
Next we discuss our parametrization of each $\mu_i$.  
First recall that, according to Lemma~\ref{lm:eigens},  $\{e_k\}_{k\in\N}$ form a complete basis set of $X$.
Our parametrization of $\mu_i$ is in the form of:
\begin{subequations}\label{e:prop}
\begin{gather}
m_j = \sum_{k=1}^\infty x_{j,k}{\alpha_k}e_k,\\
C_j^{-1} = C_0^{-1} + H_j
\end{gather}
where each $H_j$ is defined as 
\begin{equation}
 H_j \,\cdot = \sum_{k=1}^\infty h_{j,k}\<e_k,\cdot\>e_k
\end{equation}
\end{subequations}
and $x_{j,k}$ and $h_{j,k}$ are coefficients.  The following Theorem provides a sufficient condition for 
$\mu_j = \@N(m_j,C_j)$ to be a well defined Gaussian measure on $X$  
 and equivalent to $\mu_0$.
\begin{theorem} \label{th:muj}
If $x_j, h_j\in {l}_2$, and $h_{j,k} > -\frac{1}{\alpha_k}$ for all $k\in\N$, $\mu_j =\@N(m_j,C_j)$ is a Gaussian measure on $X$ that is equivalent to $\mu_0$. 
\end{theorem}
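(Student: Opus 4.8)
The plan is to invoke the Feldman--H\'ajek theorem (see \cite{da2006introduction}), which states that two Gaussian measures on a separable Hilbert space are either equivalent or mutually singular, and are equivalent precisely when (i) they share a common Cameron--Martin space, (ii) the difference of their means lies in that space, and (iii) a certain operator assembled from the two covariances is Hilbert--Schmidt. The essential simplification is that $C_0$ and $C_j$ are simultaneously diagonalized by the basis $\{e_k\}_{k\in\N}$ of Lemma~\ref{lm:eigens}: $C_0$ has eigenvalues $\alpha_k$, and since $C_j^{-1}=C_0^{-1}+H_j$ with $H_j$ diagonal with entries $h_{j,k}$, the operator $C_j$ is diagonal with eigenvalues $\beta_k=\alpha_k/(1+\alpha_k h_{j,k})$. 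Each Feldman--H\'ajek condition then becomes an elementary statement about the scalar sequences $\{\alpha_k\}$, $\{x_{j,k}\}$ and $\{h_{j,k}\}$.

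The technical core is a single estimate: $\alpha_k h_{j,k}\to 0$ as $k\to\infty$. This holds because $h_{j,k}\to 0$ (as $h_j\in l_2$) while $\{\alpha_k\}$ is bounded (being summable). It follows that the ratio $r_k:=\beta_k/\alpha_k=1/(1+\alpha_k h_{j,k})$ tends to $1$, and since every denominator is strictly positive by the hypothesis $h_{j,k}>-1/\alpha_k$, the sequence $\{r_k\}$ is bounded above and below by positive constants, uniformly in $k$. Granting this, I would first confirm that $\mu_j$ is a bona fide Gaussian measure on $X$: $C_j$ is symmetric with positive eigenvalues $\beta_k$, it is of trace class because $\beta_k=r_k\alpha_k\le c\,\alpha_k$ and $\sum_k\alpha_k<\infty$, and the mean $m_j$ lies in $X$ (in fact in $E$, as shown next).

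The three conditions then follow quickly. For (i), the Cameron--Martin norm of $\mu_j$ weights the $k$-th coordinate by $1/\beta_k$ and that of $\mu_0$ by $1/\alpha_k$; the uniform two-sided bound on $r_k$ makes these weights comparable, so the two Cameron--Martin spaces coincide with equivalent norms. For (ii), a direct computation gives $\|m_j\|_E^2=\|C_0^{-1/2}m_j\|_X^2=\sum_k\alpha_k x_{j,k}^2\le(\sup_k\alpha_k)\|x_j\|_{l_2}^2<\infty$ since $x_j\in l_2$, so $m_j\in E$. For (iii), in the diagonal setting the Hilbert--Schmidt condition reduces to $\sum_k(r_k-1)^2<\infty$; writing $r_k-1=-\alpha_k h_{j,k}/(1+\alpha_k h_{j,k})$ and using the uniform lower bound on the denominators gives $\sum_k(r_k-1)^2\le c\sum_k(\alpha_k h_{j,k})^2\le c\,(\sup_k\alpha_k)^2\|h_j\|_{l_2}^2<\infty$, again by $h_j\in l_2$. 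With all three conditions in place, Feldman--H\'ajek yields the equivalence of $\mu_j$ and $\mu_0$.

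The step I expect to be most delicate is securing the uniform positive lower bound on the denominators $1+\alpha_k h_{j,k}$ used in conditions (i) and (iii). For large $k$ these are close to $1$, but for the finitely many small indices where $h_{j,k}$ might approach $-1/\alpha_k$ one must argue that the strict inequality at each such index, together with the convergence of the tail to $1$, still guarantees a strictly positive infimum over all $k$. Once this is settled, every remaining estimate is a one-line consequence of the two $l_2$ hypotheses and the boundedness of $\{\alpha_k\}$.
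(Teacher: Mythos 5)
Your proof is correct and takes essentially the same route as the paper: both diagonalize $C_j$ in the eigenbasis $\{e_k\}$ of $C_0$, invoke the Feldman--H\'ajek theorem to compare the covariances through a square-summability condition on the eigenvalue discrepancies, and handle the mean by showing $m_j$ lies in the common Cameron--Martin space $E=C_0^{1/2}(X)=C_j^{1/2}(X)$. The only cosmetic difference is that the paper factors the equivalence as $N(m_j,C_j)\sim N(0,C_j)\sim N(0,C_0)$ and uses the normalized criterion $\sum_k(\beta_{j,k}-\alpha_k)^2/(\beta_{j,k}+\alpha_k)^2<\infty$, whose denominator $2+\alpha_k h_{j,k}>1$ sidesteps the uniform lower bound on $1+\alpha_k h_{j,k}$ that you correctly flagged (and correctly resolved) as the delicate step in your version.
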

\begin{proof}
We let $\{\beta_{j,k}\}_{k\in\N}$ be the eigenvalues of $C_j$, i.e, $C_j e_k = \beta_{j,k}e_k$ for all $k\in\N$. And it is easy to see that,
\begin{equation}
\beta_{j,k} = (\alpha_k^{-1}+h_{j,k})^{-1}=\frac{\alpha_k}{1+\alpha_k h_{j,k}}. \label{e:beta}
\end{equation}
As $x_j, h_j\in l_2$, $\frac{1}{1+\alpha_k h_{j,k}}$ is bounded and thus $\sum_{k=1}^{\infty}\beta_{j,k}<\infty$.
It follows that $C_j \in L_1^{+}(X)$ and $\mu_j =\@N(m_j,C_j)$ defines a Gaussian measure on $X$.

We now show that $\mu_j$ is equivalent to $\mu_0$. First we introduce $\mu'_j = \@N(0, C_j)$.
Using Eq.~\eqref{e:beta} and $h_{j,k} > -\frac{1}{\alpha_k}$ for all $k\in\N$, we can get 
$$\sum_{k=1}^{\infty}\frac{(\beta_{j,k}-\alpha_k)^2}{(\beta_{j,k}+\alpha_k)^2}=\sum_{k=1}^{\infty}\frac{\alpha_k^2h_{j,k}^2}{(2+\alpha_k h_{j,k})^2}
\leq \sum_{k=1}^{\infty} \alpha^2_kh_{j,k}^2<\infty,$$
as $\lim_{k\rightarrow\infty} \alpha_k = 0$ and $h_j\in l_2$.
 By the Feldman-Hajek theorem~\cite{da2006introduction}, we have $\mu'_j$ is equivalent to $\mu_0$.
Now recall that $m_j \in E = C_0^{\frac{1}{2}}(X)=C_j^{\frac{1}{2}}(X)$, and so we have $\mu'_j=\@N(0, C_j)$ and $\mu_j=\@N(m_j, C_j)$ are equivalent, 
which completes the proof.
\end{proof}

Let us assume for now that the conditions in Theorem \ref{th:muj} is satisfied and we shall verify this assumption later. 
It is easy to show that
\begin{multline}
\frac{d\mu_j}{d\mu_{0}}(u)= 
\frac{|C_{0}|^{1/2}}{|C_{j}|^{1/2}}\exp(-\frac{1}{2}\|C_{j}^{-1/2}m_{j}\|_X^{2}+\<u, C_j^{-1}m_j\>_X-\frac{1}{2}\<u, H_{j}u\>_X)\\
=\prod_{k=1}^{\infty}\sqrt{\frac{\alpha_k}{\beta_{j,k}}}\,\exp\bigg[-\frac{1}{2}\sum_{k=1}^{\infty}
\left(\frac{\alpha_k^2}{\beta_{j,k}}x_{j,k}^{2}+h_{j,k}u_{k}^{2} -\frac{2\alpha_k}{\beta_{j,k}}x_{j,k}u_{k}\right)\bigg], \label{e:mujmu0}
\end{multline}
where  $u_k = \<u, e_k\>$ is the projection of $u$ onto $e_k$.
Note that the density ${d\mu_j}/{d\mu_{0}}$ actually depends on $m_j$ and $h_j$, and thus for convenience's sake, 
we define a function $f(\cdot,\cdot,\cdot)$ such that
\[ f(u,x_j,h_j) = \frac{d\mu_j}{d\mu_0}(u),\]
and we then can derive from Eq~\eqref{e:dmudmu0} that  
\[
\frac{d\mu^{y}}{d\mu}(u)=\frac1Z\exp(-\Phi(u))/(\sum_{j=1}^{J}w_{j}f(u,x_{j},h_{j})),\]
and the density $d\mu/d\mu_y$ can be computed accordingly.

\subsection{Minimizing the Kullback-Leibler divergence}
Now recall that for the algorithm to be efficient we need the proposal $\mu$ to be close to $\mu^y$ and a natural choice is to determine $\mu$ by minimizing the 
Kullback-Leibler divergence (KLD) between $\mu^y$ and $\mu$:
\begin{equation}
D_{KL}(\mu^y||\mu)=\int \log\frac{d\mu^y}{d\mu}(u)\mu^y(du),\label{e:kld}
\end{equation}
where $\mu$ is parametrized with Eq.~\eqref{e:prop}.
Note that $x_j$ and $h_j$ are set to be of infinite dimensions in the formulation above.
In numerical simulations, however,  $x_j$ and $h_j$ must be truncated at some finite number $K$.
Such a truncation is also reasonable from a practical point of view. 
In fact, one often can realistically assume that the data is only informative on a finite number of directions~\cite{cui2014dimension,cotter2013mcmc} in $X$, 
and under this assumption, we only need to keep a finite number of components of each $x_j$ and $h_j$.
We emphasize that $K$ which represents the number of dimensions that are informed by the data (i.e., the so-called intrinsic dimensionality), 
should not be confused with the discretization dimensionality of the problem, i.e., the number of mesh points used to represent the unknown.  
Determining the value of $K$ is an important task for our algorithm and here we choose $K$ with a heuristic approach: 
\[K = \min\{k\in\N~|~ \frac{ \alpha_k}{\alpha_1} < \epsilon\},\] where $\epsilon$ is a prescribed threshold.  
 In what follows, we shall adopt this finite, $K$-dimensional formulation, 
and thus we have the following optimization problem:
\begin{equation}
\min_{\{x_j,\,h_j\in \R^K, \,w_j\in [0,1]\}_{j=1}^J} D_{KL}(\mu^y||\mu), \label{e:min}
\end{equation}
subject to $\sum_{j=1}^J w_j = 1$. 
By some elementary calculations, we can show that Eq.~\eqref{e:min} is equivalent to  
\begin{equation}
\min_{\{x_j,\,h_j\in \R^K, \,w_j\in [0,1]\}_{j=1}^J}-\int \log[\sum_{j=1}^{J}w_{j}f(u,x_{j},h_{j})]\mu^y(du), \label{e:max}
\end{equation}
subject to $\sum_{j=1}^J w_j = 1$.
We now show that the proposal $\mu$ constructed this way is well-defined in function space, and to this end we have the following corollary. 
\begin{corollary}
If $\{x_j,h_j,w_j\}_{j=1}^J$ is a solution of Eq.~\eqref{e:max} , the resulting $\mu$ is equivalent to $\mu_0$.
\end{corollary}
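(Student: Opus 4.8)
The plan is to deduce the corollary from Theorem~\ref{th:muj} together with the observation made right after Eq.~\eqref{e:mu}: the mixture $\mu=\sum_j w_j\mu_j$ is equivalent to $\mu_0$ provided each contributing component $\mu_j$ (with $w_j>0$) is. So the whole task reduces to verifying, at a solution $\{x_j,h_j,w_j\}_{j=1}^J$ of Eq.~\eqref{e:max}, that every such $\mu_j$ satisfies the two hypotheses of Theorem~\ref{th:muj}, namely $x_j,h_j\in l_2$ and $h_{j,k}>-1/\alpha_k$ for all $k\in\N$.

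The first hypothesis is free. Because Eq.~\eqref{e:max} optimizes over $x_j,h_j\in\R^K$, each is a finite vector, and padding with zeros for $k>K$ embeds it into $l_2$; the padded entries $h_{j,k}=0$ also satisfy $h_{j,k}>-1/\alpha_k$ for $k>K$ since $\alpha_k>0$. Hence only $h_{j,k}>-1/\alpha_k$ for the finitely many $k\le K$ remains to be established.

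The key point is that this positivity is built into the very definition of the objective in Eq.~\eqref{e:max}. That objective is an integral of $\log[\sum_j w_j f(u,x_j,h_j)]$, and $f(u,x_j,h_j)=d\mu_j/d\mu_0$ is given by Eq.~\eqref{e:mujmu0} only when $\mu_j$ is a genuine Gaussian measure equivalent to $\mu_0$; by Eq.~\eqref{e:beta} this requires $\beta_{j,k}=\alpha_k/(1+\alpha_k h_{j,k})$ to be positive and finite, i.e. $1+\alpha_k h_{j,k}>0$, which is exactly $h_{j,k}>-1/\alpha_k$. I would therefore argue that the feasible domain of Eq.~\eqref{e:max} is precisely the open set on which all the densities $d\mu_j/d\mu_0$ are defined, so that any solution automatically lies inside it. To reinforce that the boundary is genuinely excluded for a contributing component, I would note that as $h_{j,k}\downarrow-1/\alpha_k$ one has $\beta_{j,k}\to\infty$, so the prefactor $\prod_k\sqrt{\alpha_k/\beta_{j,k}}$ in Eq.~\eqref{e:mujmu0} degenerates to zero and $\mu_j$ loses its equivalence with $\mu_0$.

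Once both hypotheses hold for each positive-weight component, Theorem~\ref{th:muj} gives $\mu_j\sim\mu_0$, and since $\sum_j w_j=1$ forces at least one positive weight, the remark after Eq.~\eqref{e:dmudmu0} closes the argument: $\mu\ll\mu_0$ because every contributing $\mu_j\ll\mu_0$, and $\mu_0\ll\mu$ because $\mu_0\ll\mu_j\ll\mu$ for any $j$ with $w_j>0$. I expect the main obstacle to be pinning down the correct notion of a ``solution'' near the boundary and the role of zero-weight components: a component with $w_j=0$ drops out of both $\mu$ and the integrand, so its parameters are unconstrained and should simply be discarded, whereas every component that actually contributes must be shown to stay strictly off the degenerate boundary where Eq.~\eqref{e:mujmu0} ceases to define a density.
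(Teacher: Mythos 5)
Your proof is correct, but the key step --- establishing $h_{j,k}>-1/\alpha_k$ at a solution --- is argued by a genuinely different route than the paper's. The paper uses first-order optimality: at a solution of Eq.~\eqref{e:max} the partial derivative of the objective with respect to $h_{j,k}$ vanishes, and the resulting stationarity equation equates $\frac{\alpha_k}{1+\alpha_k h_{j,k}}$ times a manifestly positive integral with another manifestly positive integral (a posterior-weighted second moment), which forces $1+\alpha_k h_{j,k}>0$; the $l_2$ requirement is dismissed as obvious, just as in your zero-padding remark. You instead argue from the domain of definition: each $f(u,x_j,h_j)$ appearing in the objective is a genuine Radon--Nikodym derivative only on the open set where $1+\alpha_k h_{j,k}>0$, so a solution is automatically confined there. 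The paper's calculus route makes the positivity an output of the optimality structure itself (it is the same computation that underlies the explicit solutions~\eqref{e:one}, where $\alpha_k/(1+\alpha_k h_{j,k})$ is recovered as a positive posterior variance), and it does not require one to legislate what the feasible set of Eq.~\eqref{e:max} is; your route is more elementary (no differentiation under the integral sign) and is more careful at points the paper glosses over --- components with $w_j=0$, for which the paper's stationarity equation degenerates to $0=0$ and yields no constraint, and the two-sided absolute continuity of the mixture. The one place to tighten your write-up is the boundary case you flag but leave open: if Eq.~\eqref{e:mujmu0} is extended by continuity, then $1+\alpha_k h_{j,k}=0$ gives $f\equiv 0$ for that component, so the objective can still be evaluated there and the domain argument alone does not exclude such a point; to rule it out, observe that a contributing component with identically vanishing density wastes its weight, so transferring $w_j$ to any non-degenerate component (or perturbing $h_{j,k}$ into the interior) strictly decreases the objective, contradicting optimality. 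With that observation both arguments deliver the corollary, each conditional --- as the paper's also is --- on a minimizer actually existing.
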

\begin{proof}
It is obvious that if $\{x_j,h_j,w_j\}_{j=1}^J$ is a solution of Eq.~\eqref{e:max}, $x_j,h_j\in l_2$. 
Taking partial derivative of the objective function in Eq.~\eqref{e:max} with respect to $h_{j,k}$ and setting it to be zero yields the following equation:
\[
\int\frac{w_{j}f(u, x_{j}, h_{j})}{\sum_{l=1}^{J}w_{l}f(u, x_{l}, h_{l})}d\mu^{y}\frac{\alpha_k}{1+\alpha_k h_{j,k}}
=\int\frac{w_{j}f(u, x_{j}, h_{j})(\alpha_k x_{j,k} - u_k)^2}{\sum_{l=1}^{J}w_{l}f(u, x_{l}, h_{l})})d\mu^{y} .
\]
As the following two integrals are obviously positive:
\[
\int\frac{w_{j}f(u, x_{j}, h_{j})}{\sum_{l=1}^{J}w_{l}f(u, x_{l}, h_{l})}d\mu^{y}>0, \quad \mathrm{and}\quad
\int\frac{w_{j}f(u, x_{j}, h_{j})(\alpha_k x_{j,k} - u_k)^2}{\sum_{l=1}^{J}w_{l}f(u, x_{l}, h_{l})})d\mu^{y} >0,
\]
we have $1+\alpha_k h_{j,k}>0$.
Thus all the conditions of Theorem~\ref{th:muj} are satisfied and the corollary follows immediately. 
\end{proof}

Finally we note that, 
in the special case where J=1, namely the proposal being simply a Gaussian distribution,
our parametrization is similar to the finite rank representation used in \cite{pinski2013kullback,pinski2014algorithms}. 
In fact, the aforementioned works also proposed to approximate the posterior with a Gaussian distribution by minimizing the KLD between the two distributions. 
The major difference is the KLD (recall that it is asymmetric) formulation: the authors of \cite{pinski2013kullback,pinski2014algorithms} compute
 the divergence from the Gaussian approximation to the true posterior, while here we compute the divergence the other way around.
An advantage of the present formulation is that 
the solution to Eq.~\eqref{e:max} can be explicitly obtained:
\begin{subequations}
\label{e:one}
\begin{gather}
x_{k} = \frac{1}{\alpha_k}\int u_kd\mu^{y},\\
h_k =\frac1{\int(\alpha_k x_{k}-u_k)^2d\mu^y} - \frac1{\alpha_k},
\end{gather}
\end{subequations}
for $k = 1...K$, while in their formulation the resulting optimization problem has to be solved with a stochastic optimization algorithm.
The explicit solutions~\eqref{e:one} are of essential importance in our adaptive algorithm.

\section{The adaptive algorithm}\label{s:algorithm}
In  this section we discuss the algorithm to implement the IS method proposed in Section~\ref{s:mixture},
starting with an introduction to the adaptive MCMC. 
\subsection{Adaptive MCMC}
The basic idea of the adaptive MCMC is to repeatedly adjust the proposal parameters using the information in the previous samples.  
Here we are focused on the adaptive algorithms with IS~\cite{holden2009adaptive,giordani2010adaptive,keith2008adaptive,gaasemyr2003adaptive}, while noting that other types 
of adaptive algorithms include the adaptive MH~\cite{haario2001adaptive}, the adaptive MALA~\cite{atchade2006adaptive,marshall2012adaptive}, 
and the adaptive Metropolis-within-Gibbs~(MwG)~\cite{roberts2009examples}. Specifically our adaptive algorithm has the following three key ingredients. 
First, to enforce the asymptotic ergodicity, we terminate the adaptation in a finite number of steps. 
Secondly we use a tempered pre-run to obtain the initial parameter values for the iteration.
Simply speaking the technique of tempering is to construct a sequence of intermediate distributions that converge to the true posterior $\mu^y$ 
and use these intermediate distributions to guide the MCMC samples to the true posterior. 
This strategy is particularly useful for multimodal posterior distributions.  
Without loss of generality, we assume that the tempering distributions are augmented by a tempering parameter $\lambda$:
\[\frac{d\mu^{y,\lambda}}{d\mu_0} \propto \exp(-\lambda\Phi(u)),\]
and clearly $\mu^{y,\lambda} = \mu^y$ when $\lambda =1$ and the tempering distribution is ``wider'' than the true posterior for $0\leq\lambda <1$.
In practice we can choose a finite number of tempering parameters $\{\lambda_i\}_{i=1}^{I_\mathrm{temp}}$ where 
$0\leq\lambda_1<\lambda_2<...< \lambda_{I_\mathrm{temp}}=1$.
We also note that for problems where the posterior is not too far apart from the prior, tempering may not be necessary. 
Finally we estimate and update the proposal parameters after every fixed number of iterations. 
The adaptive scheme is summarized as the following:
\begin{itemize}
\item Initialization: the total number of iterations $I_\mathrm{tol}$, the number of adapted iterations $I_\mathrm{adp}$, the number of pre-run (tempering) iterations $I_\mathrm{temp}$,
a set of  tempering parameters $\{\lambda_i\}_{i=1}^{I_\mathrm{temp}}$, 
the number of samples used in each tempered iteration $N_\mathrm{temp}$, the number of samples in each iteration $N_S$. 
\item Pre-run (optional): let $\mu'_{(0)} =\mu_0$; for $i =1:I_\mathrm{temp}$ perform:
\begin{enumerate}
\item Run MCMC with proposal $\mu'_{(i-1)}$ to draw a set of $N_\mathrm{temp}$ samples from $\mu^{y,\lambda_i}$, denoted by $S'_i$.  
 

\item Update the parameter values with samples $S'_i$ obtaining proposal $\mu'_{(i)}$;

\end{enumerate}

\item Iteration: let  $S = \emptyset$ and  $\mu_{(0)}= \mu'_{(I_\mathrm{temp})}$; for i= $1$ to $I_\mathrm{tol}$ perform:
\begin{enumerate}
\item Run MCMC with proposal $\mu_{(i-1)}$ to draw a set of $N_S$ samples from $\mu^y$, denoted by $S_i$.  
Let $S = S \cup S_i$.  

\item If $i<I_\mathrm{adp}$, update the parameter values with samples $S$ obtaining proposal $\mu_{(i)}$;
otherwise, let $\mu_{(i)} = \mu_{(i-1)}$. 
\end{enumerate}
\end{itemize}

The adaptive algorithm presented above is rather simple; we note, however, that our method is rather flexible and one can pair it with any desired adaptive IS algorithm. 
A key step in the adaptive algorithm is to estimate the parameters from the samples, 
which is done by solving the sample average estimator of the optimization problem~\eqref{e:max}: 
\begin{equation}
\max_{\{x_j,h_j,w_j\}_{j=1}^J}\sum_{n=1}^N \log[\sum_{j=1}^{J}w_{j}f(u^{n},x_{j},h_{j})], \label{e:max2}
\end{equation}
subject to $\sum_{j=1}^J w_j = 1$.
Next we discuss two methods to solve Eq.~\eqref{e:max2}.

\subsection{Expectation Maximization algorithm}
The expectation maximization (EM) is one of the most popular methods to determine the parameters in mixture models~\cite{mclachlan2004finite}. 
Simply put, the EM algorithm iteratively updates the parameter values in a way that the function value is always increased until convergence is achieved. Each iteration consists of an Expectation-step and a Maximization-step.
It should be noted that, the EM algorithm, is not guaranteed to converge to the optimal solutions in general~\cite{wu1983convergence}. 
The theory and implementation details of the EM algorithm and its application to mixture models can be found in the aforementioned references , and we shall not repeat them here. 
When applied to our problem,  the update formula in each iteration can be explicitly obtained. 
In the Expectation-Step, the membership probability $q^n_{j}$, namely the probability that a sample $u^{n}$ is in the mixture $j$, is computed, 
\begin{equation}
q_{j}^n = \frac{w_{j}f(u^{n}, h_{j}, m_{j})}{\sum_{j=1}^{J}w_{j}f(u^{n}, h_{j}, m_{j})},
\end{equation}
for each $j=1...J$ and $n=1...N$; in the 
Maximization-Step, the parameter values are updated using the following equations:
\begin{subequations}
\begin{align}
&w_{j}=\frac{1}{N}\sum_{i=1}^{N} q^{i}_j,\\
&x_{j,k}=\frac1{N\alpha_kw_j}\sum_{n=1}^{N}
q^n_j u_{k}^{n}\\
&h_{j,k}=\sum_{n=1}^{N}q^n_j(\sum_{n=1}^{N}q^n_j(\alpha_k x_{j,k} - u_k^{n})^2)^{-1}-\frac{1}{\alpha_{k}},
\end{align}
\end{subequations}
where $u^n_k = \<u^n,e_k\>$.
  The EM algorithm is arguably the most common method to estimate the parameters of mixtures. 
	However, our numerical tests indicate that in some practical problems the EM algorithm is not sufficiently reliable especially when the sample set only contains a small number of accepted draws.  
	Moreover, our algorithm frequently updates the proposal parameters, which makes the computationally intensive EM algorithms less
	attractive from an efficiency perspective. For these reasons, we propose an alternative method to EM, which estimates the mixture parameters using clustering. 

\subsection{Estimating parameters with clustering}

Our estimation method with clustering is largely based on the finite dimensional method developed in \cite{giordani2010adaptive}.
The idea is rather simple: one first partitions the samples into several clusters and then fit each cluster with a Gaussian distribution. 
A difficulty here is that our MCMC samples are of infinite dimension, which makes clustering challenging. 
To solve the problem, we first project the samples onto the $K$ eigenfunctions of the covariance operator and then cluster the resulting $K$ dimensional data $\{ (u_1^n,...,u_K^n)\}_{n=1}^N$ and $u_k^n =\<u^n,e_k\>$. 
Specifically we use the k-means algorithm to cluster the data,  
and the number of clusters $J$ is determined with the Bayesian information criteria~(BIC) method~\cite{mclachlan2004finite}. In fact we have found in our numerical tests that the algorithm is rather robust against the number of clusters. 
We then use the Gaussian distribution parametrized in the form of Eq~\eqref{e:prop} to fit each cluster, and thanks to Eq.~\eqref{e:one},
 the parameters values can be estimated explicitly as,
 \begin{subequations}
\label{e:x-h}
\begin{gather}
x_{j,k} = \frac{1}{N_j\alpha_k}\sum_{u^n\in \Theta_j}u_{k}^{n},\\
h_{j,k} = \frac{1}{\frac{1}{N_j}\sum_{u^n\in\Theta_j}(u_{k}^{n})^2-m_{j,k}^{2}}-\frac{1}{\alpha_k},
\end{gather}
\end{subequations}
where $\Theta_j$ is the $j$-th cluster of samples, $N_j$ is the sample size of $\Theta_j$, 
for $j=1...J$ and $k=1...K$. 
The mixture weights are simply determined by the fraction of samples in each cluster. 
We note that the clustering based method does not generally yield a solution to Eq.~\eqref{e:max2} and thus we regard it as an approximate method to estimate the parameters.  
We conclude the section with a pseudo code (Algorithm \ref{a:code}) of our algorithm, and interested readers can use it as a basis for their own implementation.

\begin{algorithm}
\SetAlgoLined
\SetKwInOut{Input}{input} \SetKwInOut{Output}{output} 
\Input{$I_\mathrm{temp}$,  $\{\lambda_i\}_{i=1}^{I_\mathrm{temp}}$, $N_\mathrm{temp}$, $N_\mathrm{tol}$, $N_\mathrm{\max}$,  $N_\mathrm{adp}.$}
\Output{$N_\mathrm{tol}$ samples drawn from $\mu^y$: $\{u^n\}_{n=1}^{N_\mathrm{tol}}$.}
  $\mu \la \mu_0$;\\
 \For {$i\la 1$ \KwTo $I_\mathrm{temp}$}{
      draw $u^0 \sim \mu$; \\
\For {$n\la 1$ \KwTo $N_\mathrm{temp}$} {
 draw $u'\sim\mu$;\\
 draw $a \sim U[0,1]$ and compute
\[A \la \min\{1,\frac{d\mu^{y,\lambda_i}}{d\mu}(u') \frac{d\mu}{d\mu^{y,\lambda_i}}(u^{n-1})\};\]
 \\
 \leIf{$A>a$} {$u^{n} \la u'$;}{$u^{n}\la u^{n-1}$}
 }
cluster $\{u^0,...,u^{N_\mathrm{temp}}\}$ into $J$ subsets:  $\Theta_1,...,\Theta_J$ ;  \\
\For{$j\la 1$ \KwTo $J$}{
$N_j\la$ sample size of $\Theta_j$, $w_j \la N_j/N_\mathrm{temp}$;\\
compute parameters $x_j$ and $h_j$ using Eqs.~\eqref{e:x-h};\\
compute $\mu_j$ using Eq.~\eqref{e:mujmu0};}
 $\mu \la\sum_{j=1}^Jw_j \mu_j$;
}
draw $u^0\sim \mu$; \\
\For {$n\la 1$ \KwTo $N$} {
 draw $u' \sim\mu$;\\
 draw $a \sim U[0,1]$ and compute
\[A \la \min\{1,\frac{d\mu^{y}}{d\mu}(u') \frac{d\mu}{d\mu^{y}}(u^{n-1})\};\]
 \leIf{$A>a$} {$u^{n} \la u'$;}{$u^{n}\la u^{n-1}$}
\If{$(n<N_{\max})\&(n \mod N_\mathrm{adp} = 0)$}{
cluster $\{u^0,...,u^{n}\}$ into $J$ subsets:  $\Theta_1,...,\Theta_J$ ;  \\
\For{$j\la 1$ \KwTo $J$}{
$N_j\la$ sample size of $\Theta_j$, $w_j \la N_j/n$;\\
compute parameters $x_j$ and $h_j$ using Eqs.~\eqref{e:x-h};\\
compute $\mu_j$ using Eq.~\eqref{e:mujmu0};}
 $\mu \la\sum_{j=1}^Jw_j \mu_j$;}}
\caption{The complete algorithm for the adaptive IS with GM. $I_\mathrm{temp}$ 
is the number of tempered iterations. 
$\{\lambda_i\}_{i=1}^{I_\mathrm{temp}}$ are the tempering parameters.
$N_\mathrm{temp}$ is the number of samples used in each tempered iteration.
$N_\mathrm{tol}$ is the total number of samples drawn by the algorithm. 
$N_\mathrm{adp}$ is the number of samples drawn between two consecutive  parameter updates. $N_\mathrm{\max}$ is the maximum length of chain before the adaptation is terminated. }
\label{a:code}
\end{algorithm}

\section{Numerical examples} \label{s:examples}
\subsection{An ordinary differential equation example}
Our first example is a simple inverse problem where the forward model is governed by an ordinary differential equation (ODE):
\begin{equation}
\frac{d x(t)}{dt} = -u(t) x(t)
\end{equation}
with a prescribed initial condition. 
We assume that the solution $x(t)$ is observed  at several times in the interval $[0, T]$ and we want to infer the unknown
coefficient $u(t)$ for $t\in [0, T]$.

\begin{figure}
\centerline{\includegraphics[width=.75\textwidth]{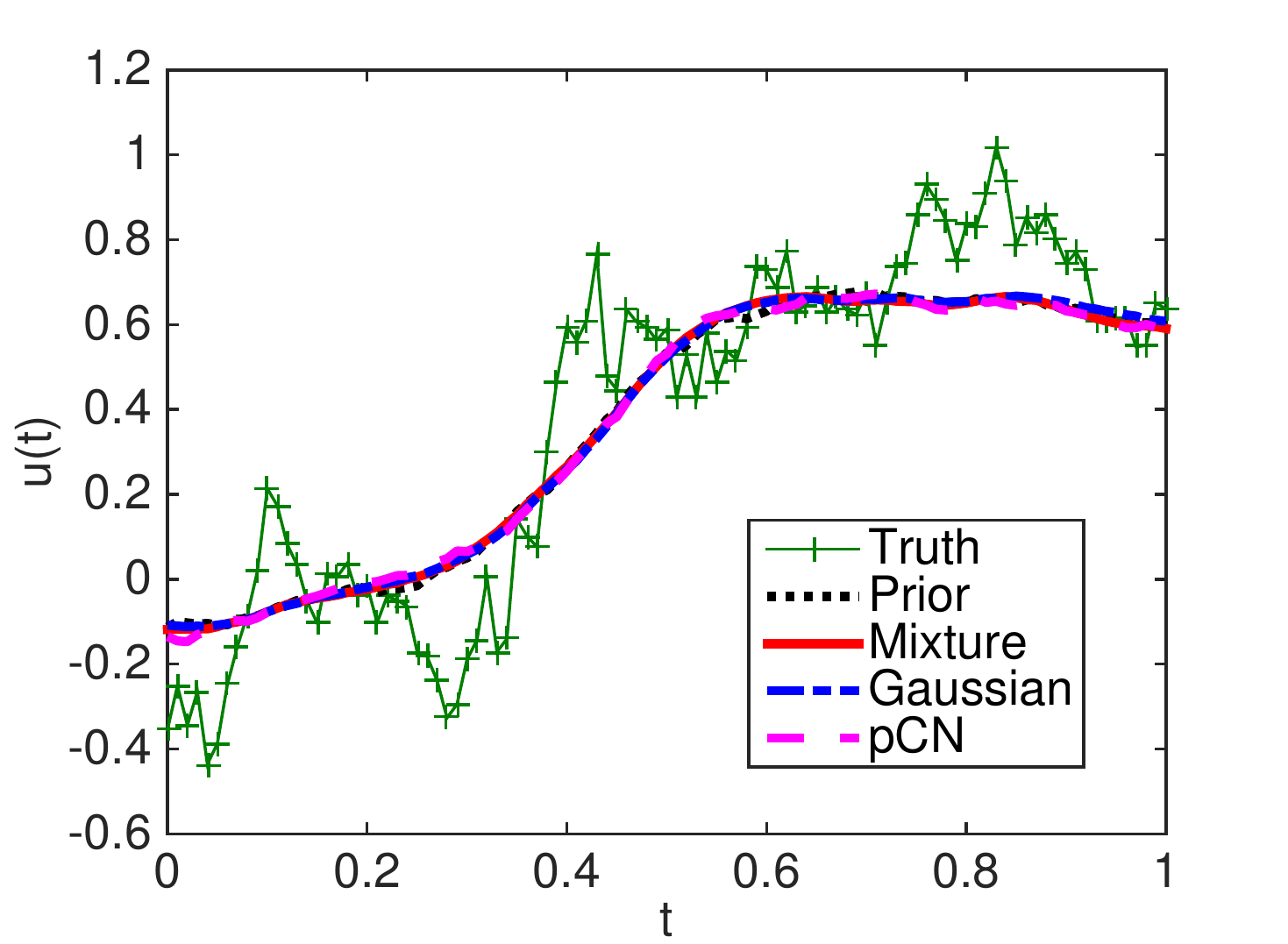}}
\caption{(for example 1) The posterior mean computed with the four different MCMC schemes. The truth is also plotted for comparison. }
\label{f:mean_ode}
\end{figure}

\begin{figure}
\centerline{\includegraphics[width=.9\textwidth]{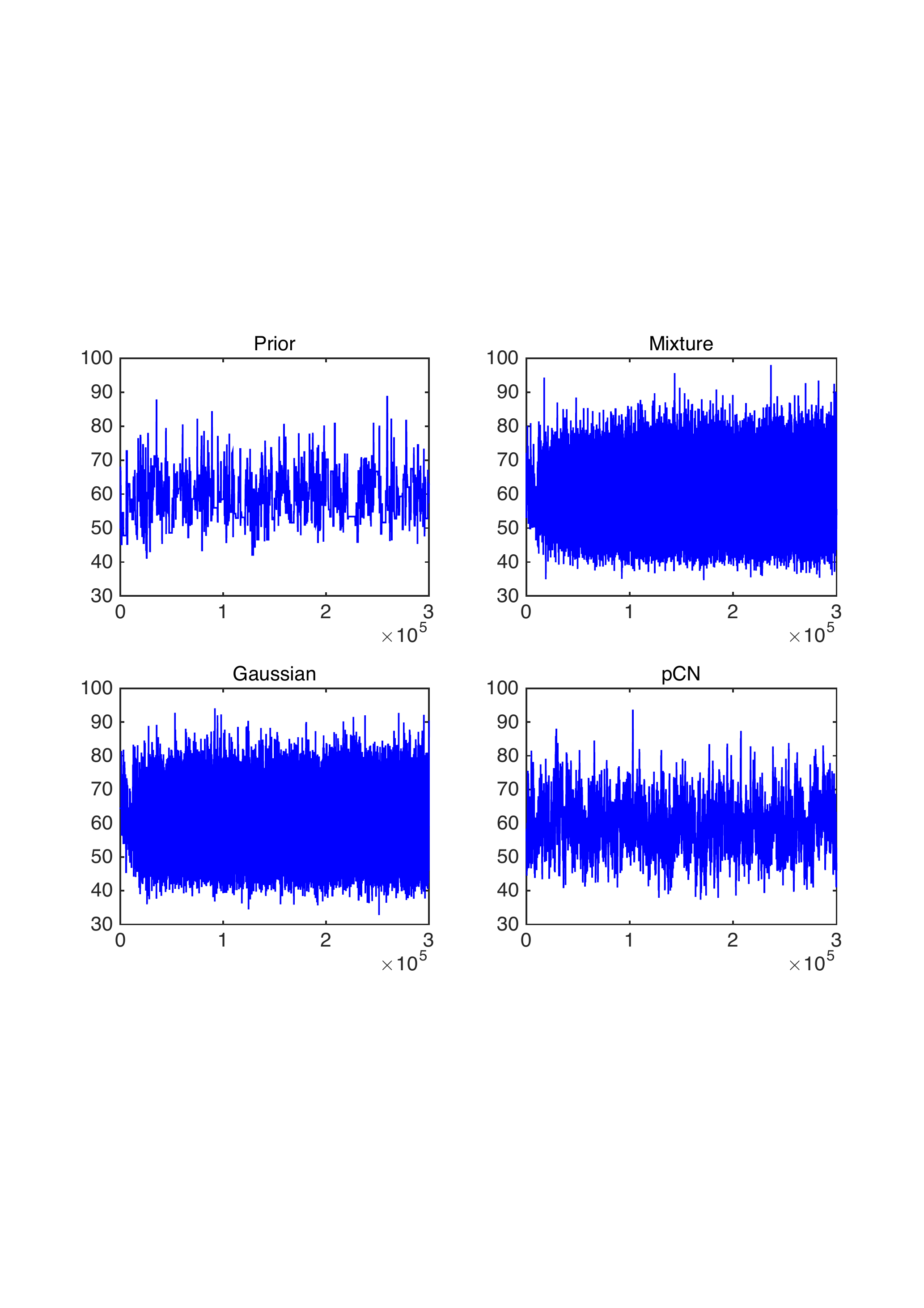}}
\caption{(for example 1) The trace plots of the OMF for the four different MCMC schemes.}
\label{f:trace}
\end{figure}

\begin{figure}
\centerline{\includegraphics[width=.5\textwidth]{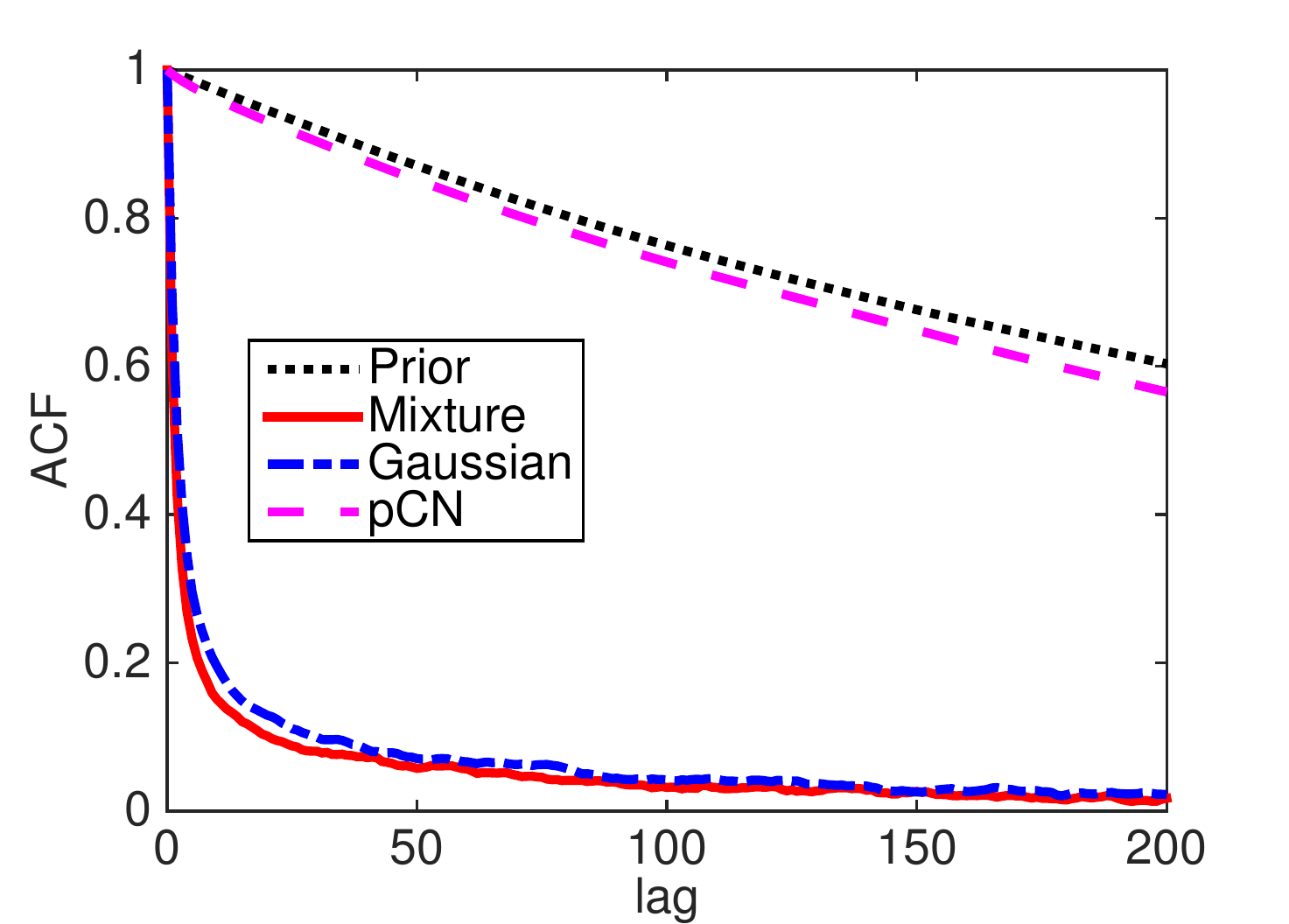}
\includegraphics[width=.5\textwidth]{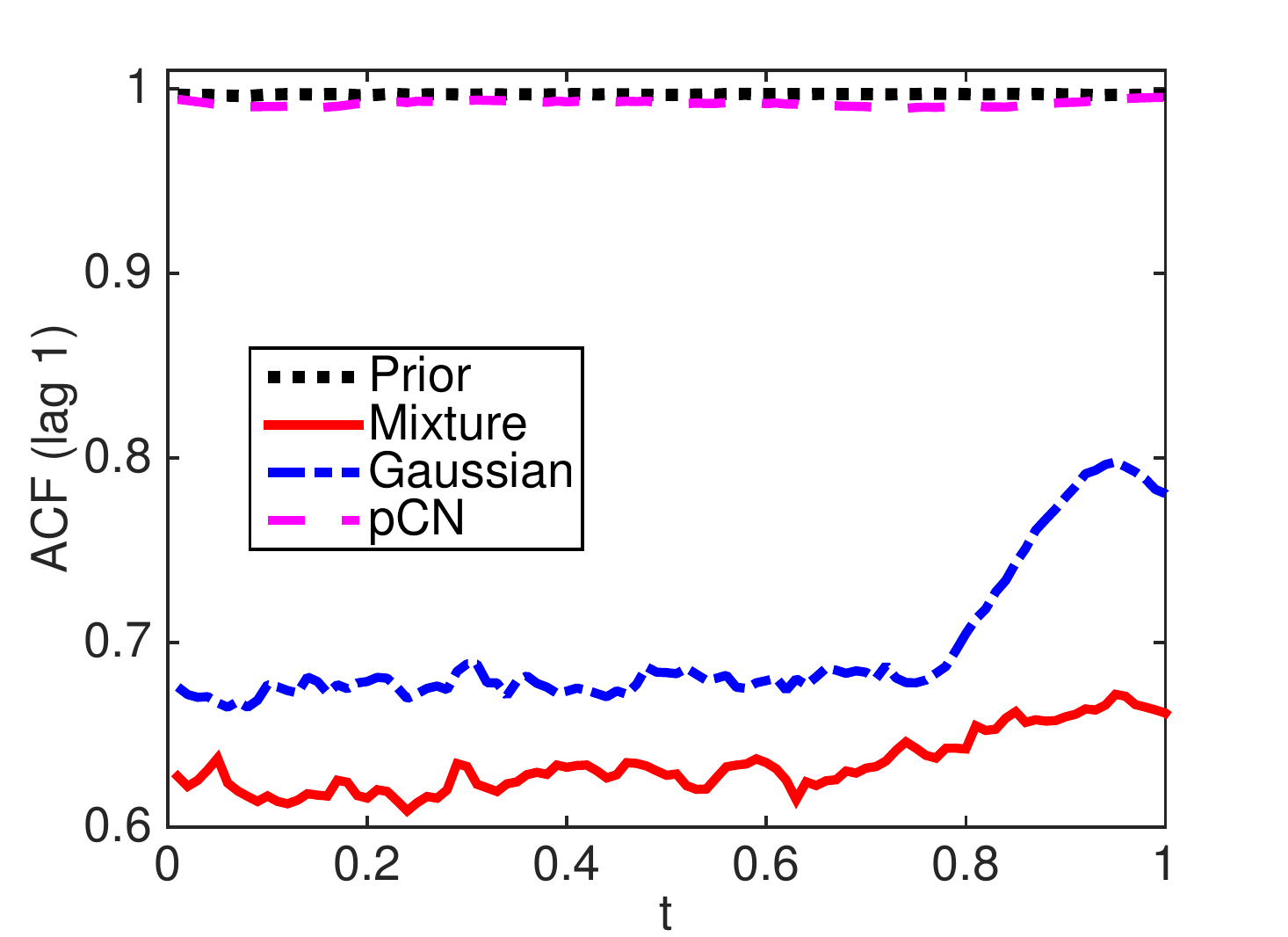}}
\caption{(for example 1) Autocorrelation functions (ACF) for the four different MCMC methods. Left: ACF of the OMF plotted as a function of lags. Right: the lag 1 ACF for $u$ at each grid point.}
\label{f:acf}
\end{figure}

\begin{figure}
\centerline{\includegraphics[width=.5\textwidth]{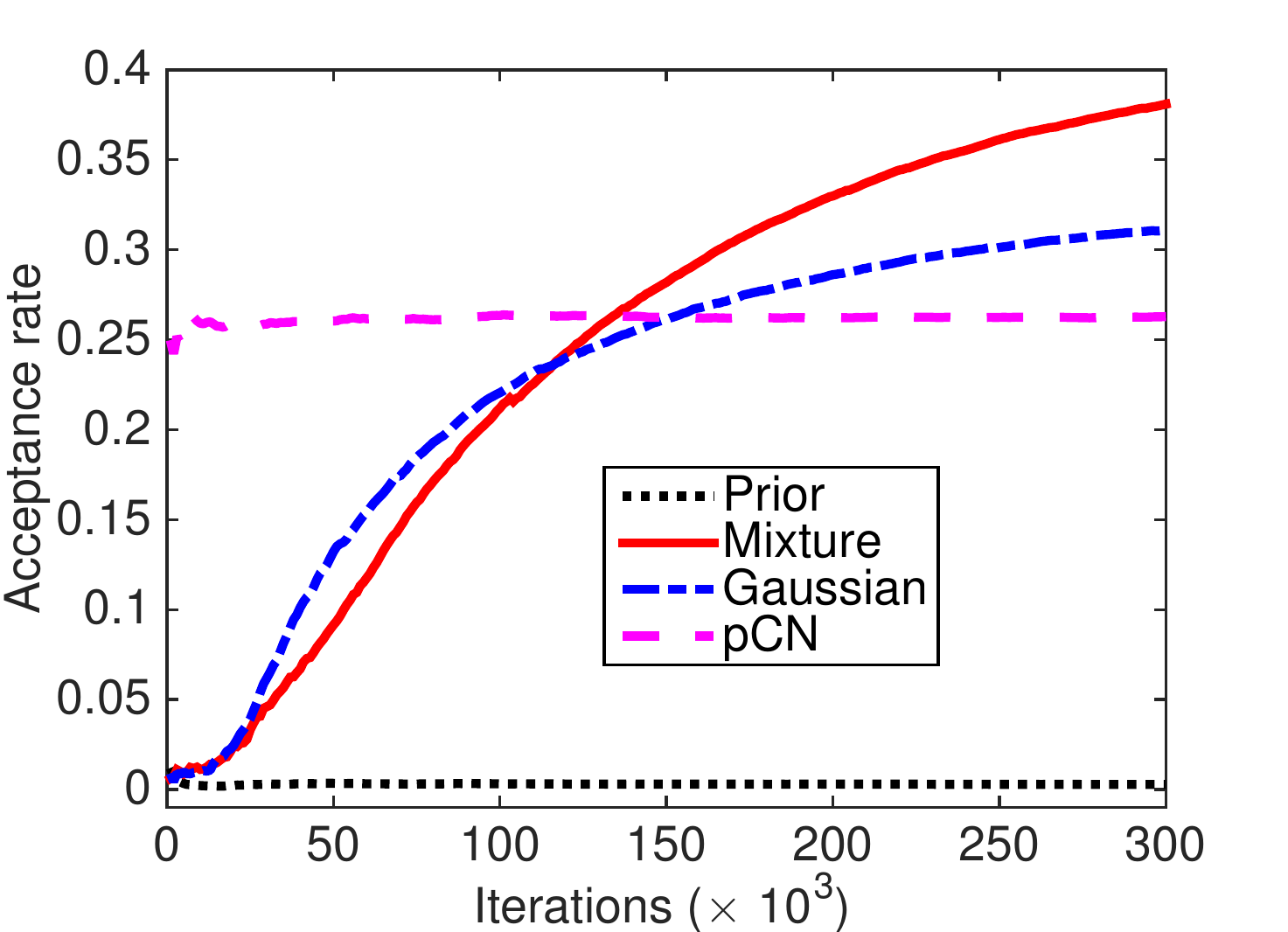}
\includegraphics[width=.5\textwidth]{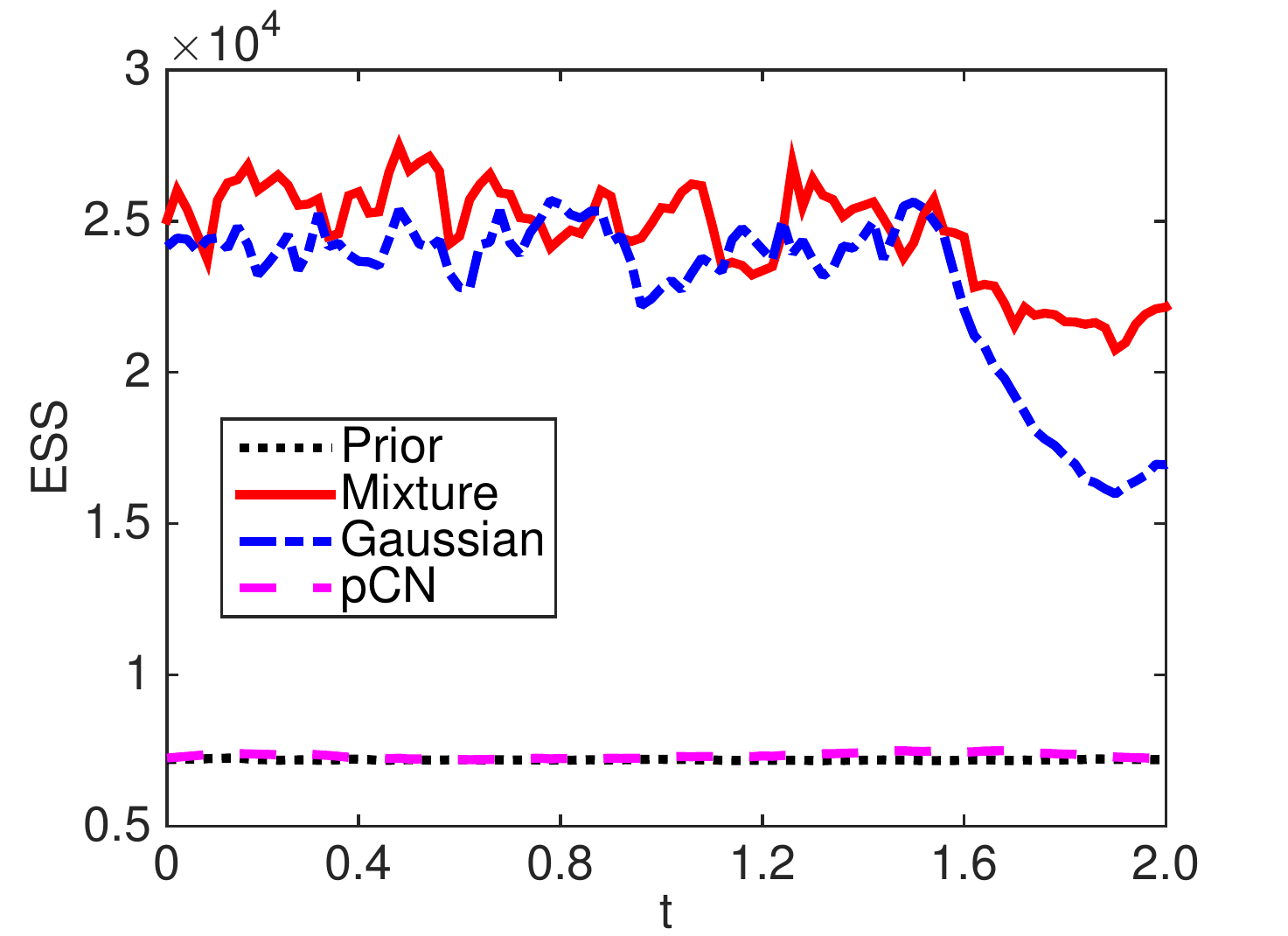}
}
\caption{(for example 1) Left: the acceptance rate of the four MCMC schemes. Right: the ESS at each grid point.}
\label{f:acc-ess}
\end{figure}

In our numerical experiments, we let  the initial condition be $\eta(0) = 1$ and $T=1$. 
Now suppose that the solution is measured every $T/20$ time unit from $0$ to $T$ and the error in each measurement is assumed to be an independent zero-mean Gaussian random variable 
with variance $0.05^2$. In the computation, 100 equally spaced grid points are used to represent the unknown. 
Moreover, we  assume that the state space for $u$ is $X=L_2([0,T])$ and 
 the prior is a zero-mean Gaussian measure in $X$ with an exponential covariance function:
\begin{equation}
C(t,t') = \exp(-\frac{|t-t'|}{2}). \label{e:expcov}
\end{equation} 
The true coefficient $u(t)$ is a realization from the prior (shown in Fig.~\ref{f:mean_ode}) and the data is simulated accordingly. 

We now draw samples from the posterior of $u(t)$ with four different MCMC schemes: prior-based IS, adaptive IS with Gaussian approximation, adaptive IS with Gaussian mixtures, and the random walk pCN (RW-pCN). In each MCMC scheme, $3\times10^{5}$ draws are generated. 
In the prior based IS, one simply proposes according to the prior distribution, and no adaptation is used. 
In the adaptive IS with Gaussian approximation, the proposal is restricted to be a single Gaussian (i.e. $J=1$), and in this case clustering is not needed. 
In both of the adaptive IS methods, the parameters are updated after every 1000 draws, and the parameter adaptation is terminated in the last $10^5$ iterations.
We do not use tempering in this example. The RW-pCN algorithm used in this work iterates as follows
\begin{enumerate}
\item propose   $u_\mathrm{proposed} = \sqrt{1-\beta^2} u_\mathrm{current} + \beta w$, where $w \sim \mu_0$ 
\item 
Let $u_\mathrm{next}=u_\mathrm{proposed}$ with probability 
\[a = \min \{1, \exp( \Phi(u_\mathrm{proposed}) - \Phi(u_\mathrm{current}))\},\] 
and let $u_\mathrm{next} = u_\mathrm{current}$ with probability $1-a$.
\end{enumerate}
In this example we use $\beta =0.1$.
Note that, in all the numerical examples, we choose the stepsize $\beta$ so that the resulting acceptance probability 
is in the range $20\%-30\%$, as is recommended in \cite{roberts2001optimal}.  

In Fig.~\ref{f:mean_ode}, we show the posterior mean computed by the four MCMC schemes, while the truth is also shown for comparison purpose. 
One can see that the results of the four algorithms are nearly identical, suggesting that all the algorithms can estimate the posterior mean to a similar level of accuracy.  We then use the OMF as an indicative parameter and show the trace plots of it in Fig.~\ref{f:trace}.
We see from the plots that the two adaptive IS algorithms achieve much faster mixing rate than the other two methods.   
To further compare the efficiency of the methods, we compute the autocorrelation functions (ACF) of various quantities 
with the samples drawn by the four methods, and plot the ACF results in Fig.~\ref{f:acf}.
In particular, we plot the ACF of the OMF as a function of lag in Fig.~\ref{f:acf} (left) and 
 show the lag 1 ACF for the unknown $u$ at each grid point in  in Fig.~\ref{f:acf} (right).
It can be seen from the figure that, our adaptive algorithms with single Gaussian proposal and with mixtures both result in much lower ACF values than the other two methods.  
When comparing the two adaptive algorithms, the mixture proposal outperforms the single Gaussian.
For the IS algorithms, the acceptance probability is also a useful performance indicator, where higher acceptance rates are usually preferred, while  
it is not the case for random walk algorithms~\cite{roberts2001optimal}. 
In Fig~\ref{f:acc-ess} (left) we plot the acceptance probability as a function  of iterations for all the methods. 
For the three IS algorithms,  one can see that the two adaptive algorithms have significantly higher acceptance probability than the prior based method.
Meanwhile, the acceptance probability of IS with mixtures is higher than that of the one with the single Gaussian. 
The effective sample size (ESS) is another common measure of the sampling efficiency of MCMC~\cite{Kass1998}. 
ESS is computed 
by \[\mathrm{ESS} = \frac{N}{1+2\tau},\]
where $\tau$ is the integrated autocorrelation time and $N$ is the total sample size, and it gives an estimate of the number of effectively independent draws in the chain.
We computed the ESS of the unknown $u$ at each grid point and show the results in Fig.~\ref{f:acc-ess} (right). 
Once again, the plots indicate that the adaptive algorithms produce much more effectively independent samples than the prior based IS
and the RW-pCN, while the mixture proposal outperforms the single Gaussian one in most of the dimensions. 
In summary, in this simple nonlinear inverse problem, we show that our adaptive algorithms are significantly more efficient than
the prior based IS and the RW-pCN. Meanwhile, the mixture proposal outperforms the single Gaussian one, indicating
that the more flexible mixture representation does improve the efficiency.  

\subsection{A bimodal likelihood function example}
Our second example is  an artificially constructed bimodal problem.  Once again we assume the unknown $u\in X = L^2([0,1])$ and the prior is a zero mean Gaussian measure with the same covariance function Eq.~\eqref{e:expcov} as the first example.
We consider a bimodal likelihood function, given by, 
\[ \exp(-\Phi(u)) \propto \exp(-\frac12\|u-\sin(2\pi t)\|_2^2)+\exp(-\frac12\|u+\sin(2\pi t)\|_2^2),\]
and it can be verified that the $\Phi(\cdot)$ chosen this way satisfies the Assumptions (6.1) in ~\cite{cotter2013mcmc}. 
It is easy to see that the posterior distribution should have two modes: one is close to $\sin(2\pi t)$ and the other is close
to $-\sin(2\pi t)$.  

We draw samples from the posterior of $u(t)$ with the same four MCMC schemes used in the first example, and in each MCMC scheme, $5\times10^{5}$ draws are generated. 
In both of the adaptive IS methods, the parameters are updated after every 1000 draws, and the adaptation is terminated in the last $10^5$ iterations, with no tempering used.  In the RW-pCN, we choose $\beta =0.5$. In all the computations, 100 grid points are used to represent the unknown function $u$.

\begin{figure}
\centerline{\includegraphics[width=1\textwidth]{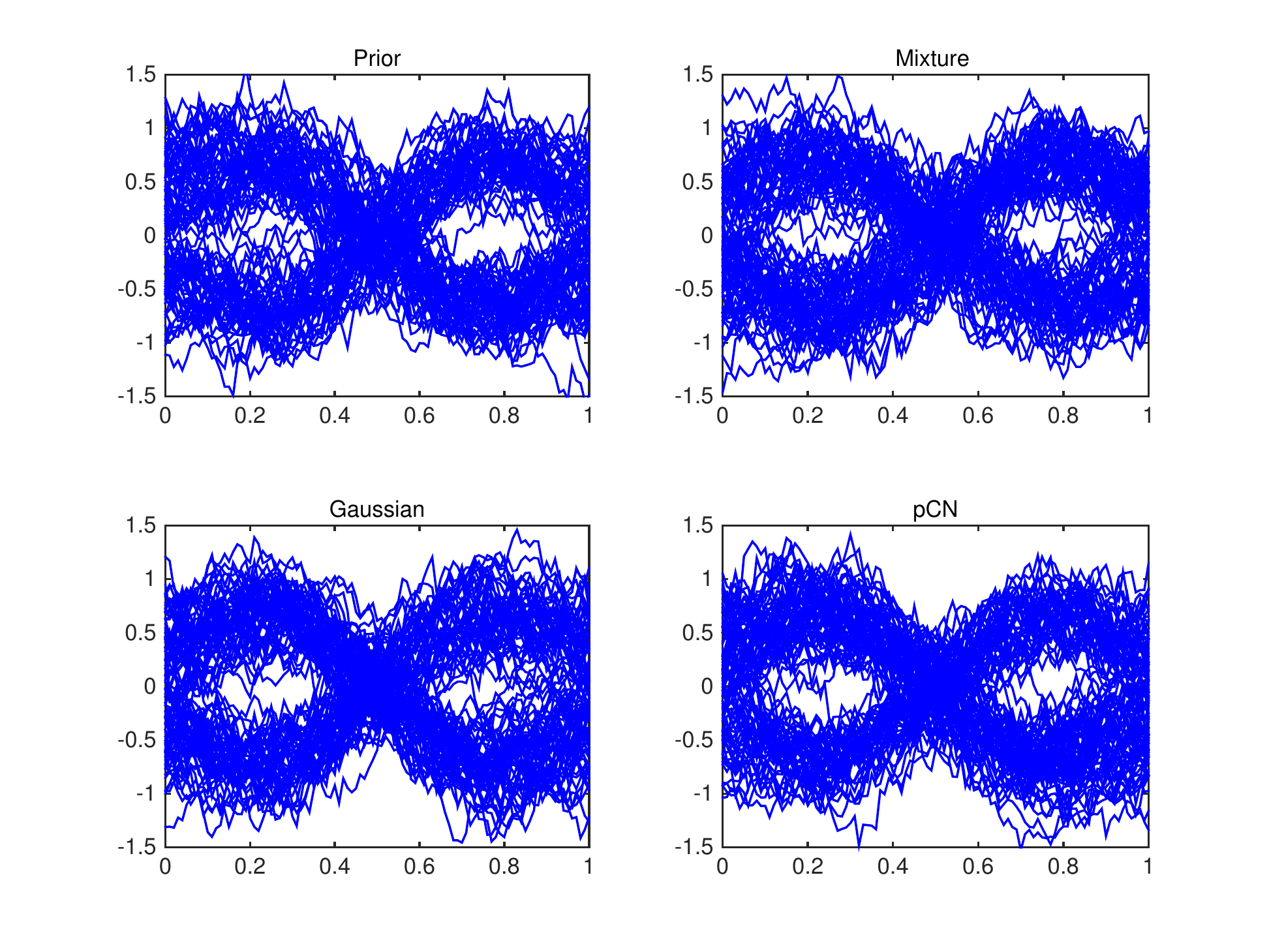}}
\caption{(for example 2) 100 samples randomly selected from the chain drawn by each method.}
\label{f:samples}
\end{figure}

\begin{figure}
\centerline{\includegraphics[width=.9\textwidth]{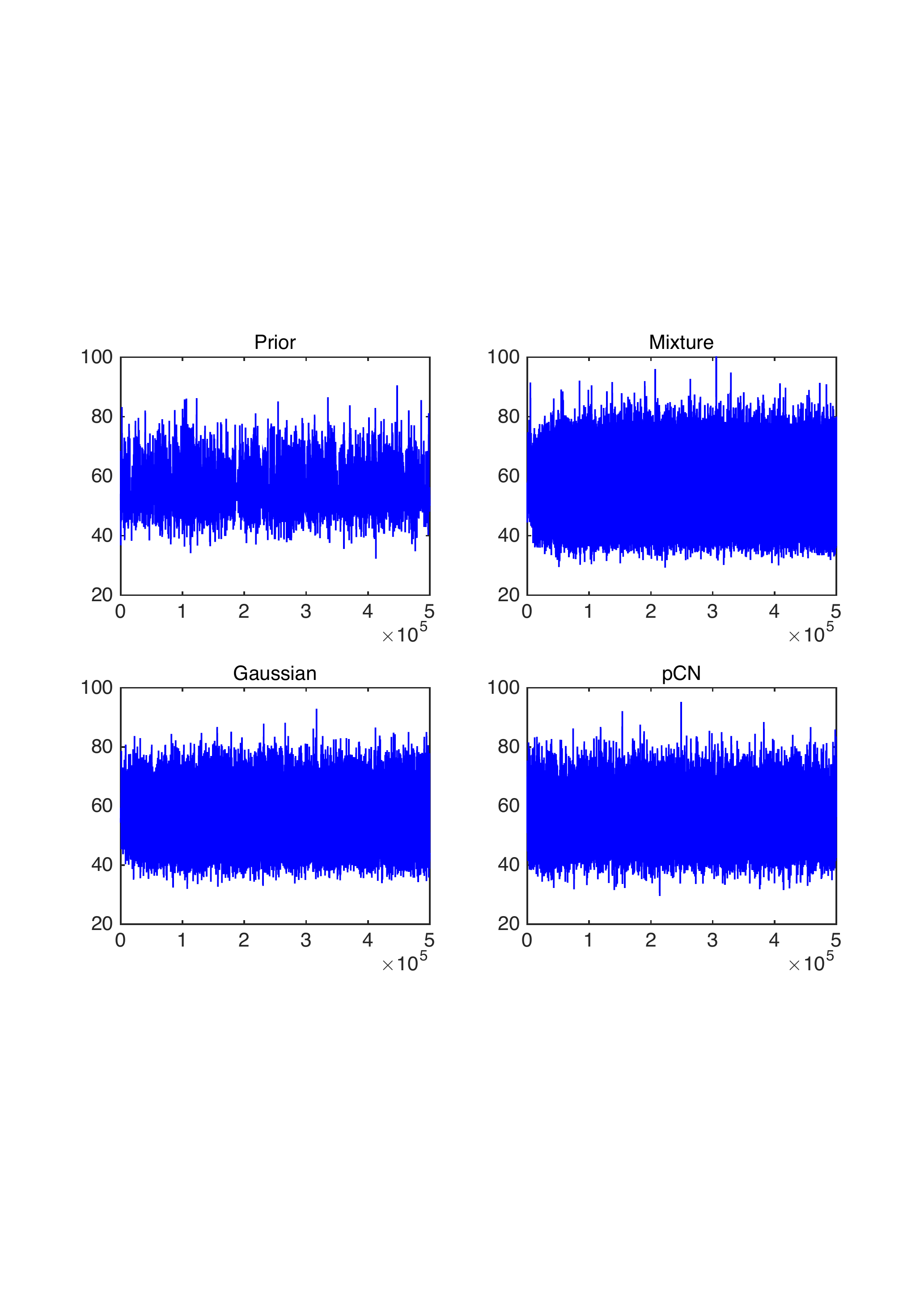}}
\caption{(for example 2) The trace plots of the OMF for the four different MCMC schemes.}
\label{f:trace_sin}
\end{figure}

\begin{figure}
\centerline{\includegraphics[width=.5\textwidth]{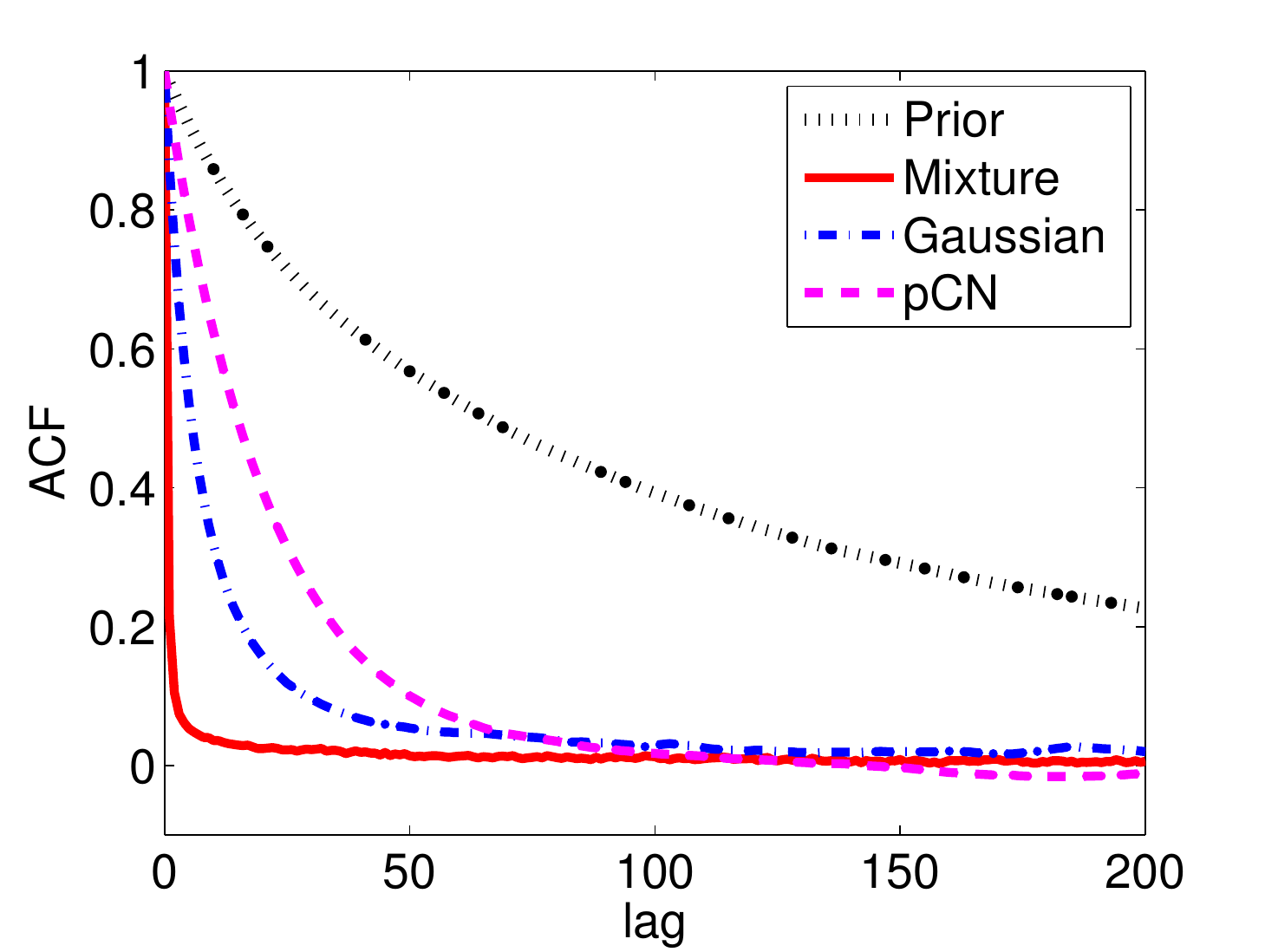}
\includegraphics[width=.5\textwidth]{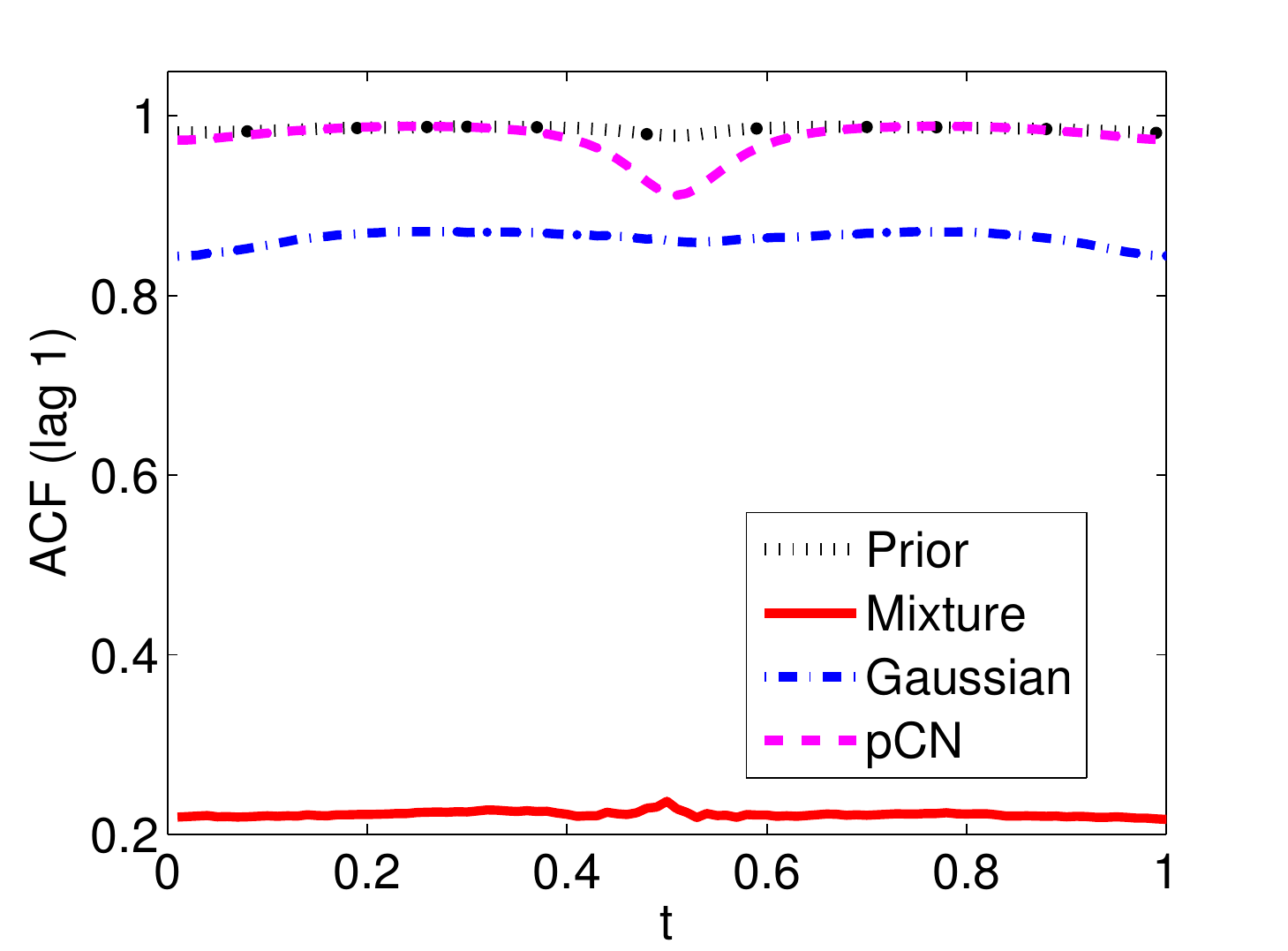}}
\caption{(for example 2) Autocorrelation functions (ACF) for the four different MCMC methods. Left: ACF of the OMF plotted as a function of lags. Right: the lag 1 ACF for 
$u$ at each grid point. }
\label{f:acf_sin}
\end{figure}

\begin{figure}
\centerline{\includegraphics[width=.5\textwidth]{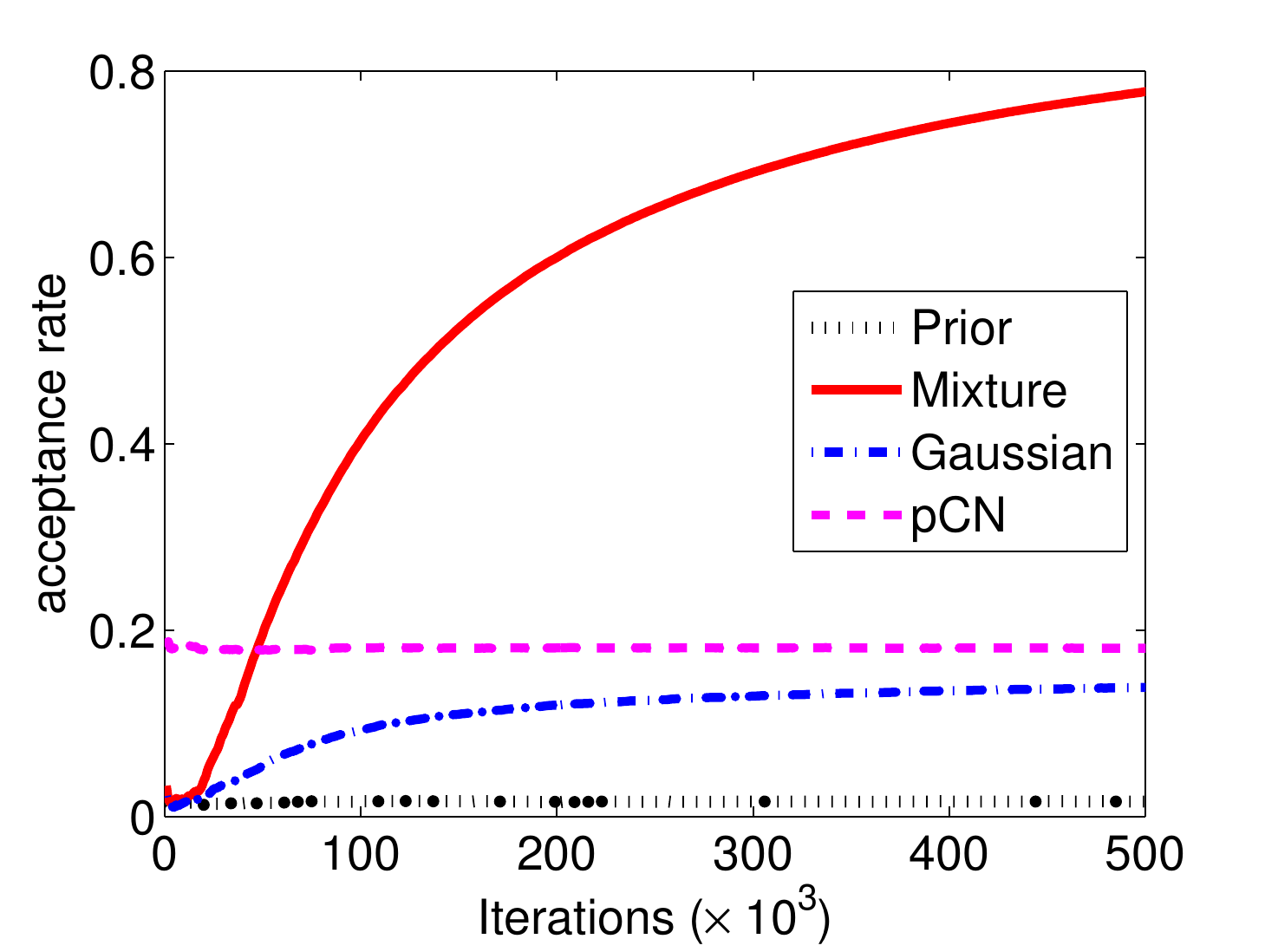}
\includegraphics[width=.5\textwidth]{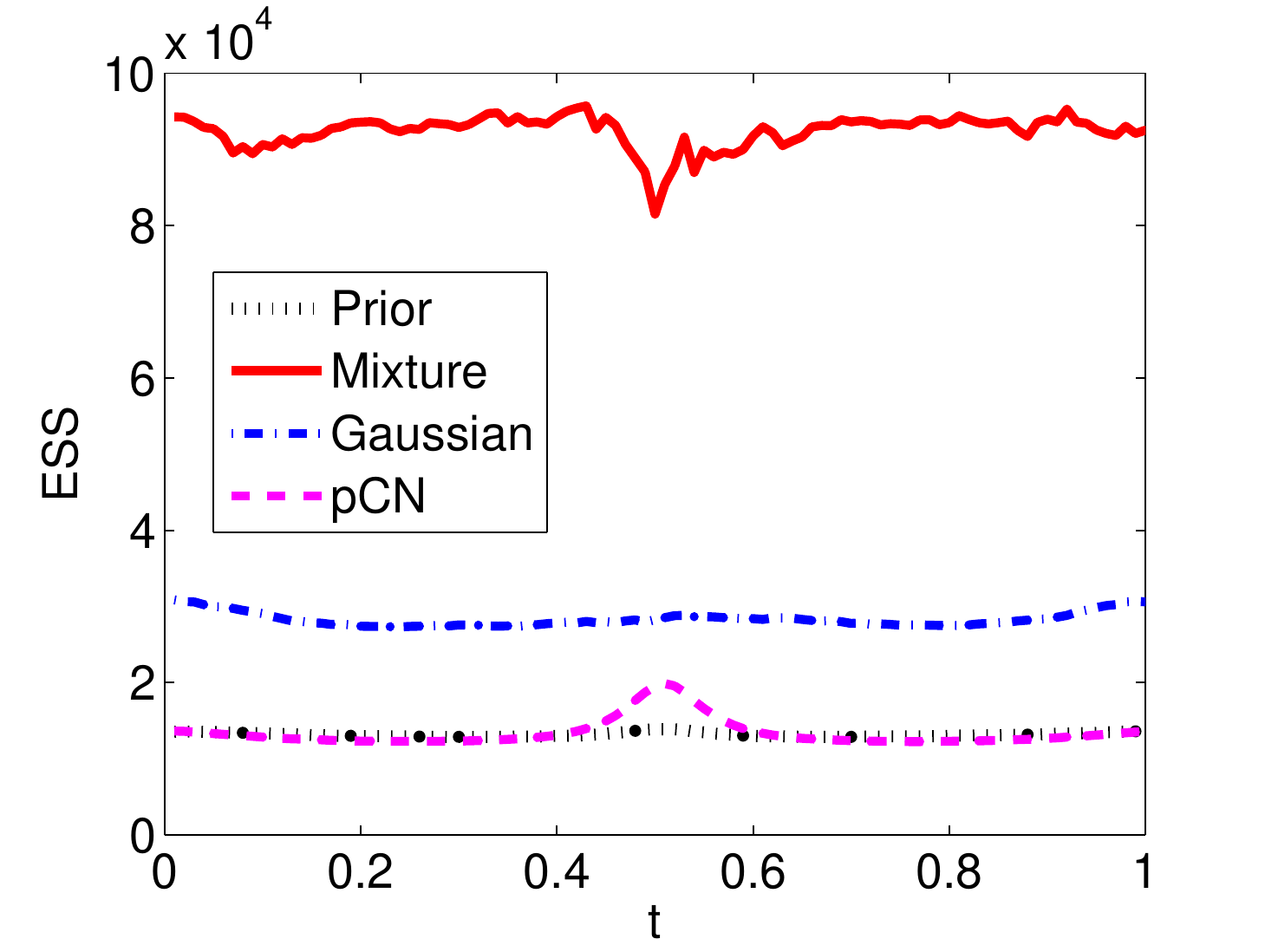}}
\caption{(for example 2) Left: the acceptance rate of the four MCMC schemes. Right: the ESS at each grid point.}
\label{f:acc-ess_sin}
\end{figure}

\begin{figure}
\centerline{\includegraphics[width=.5\textwidth]{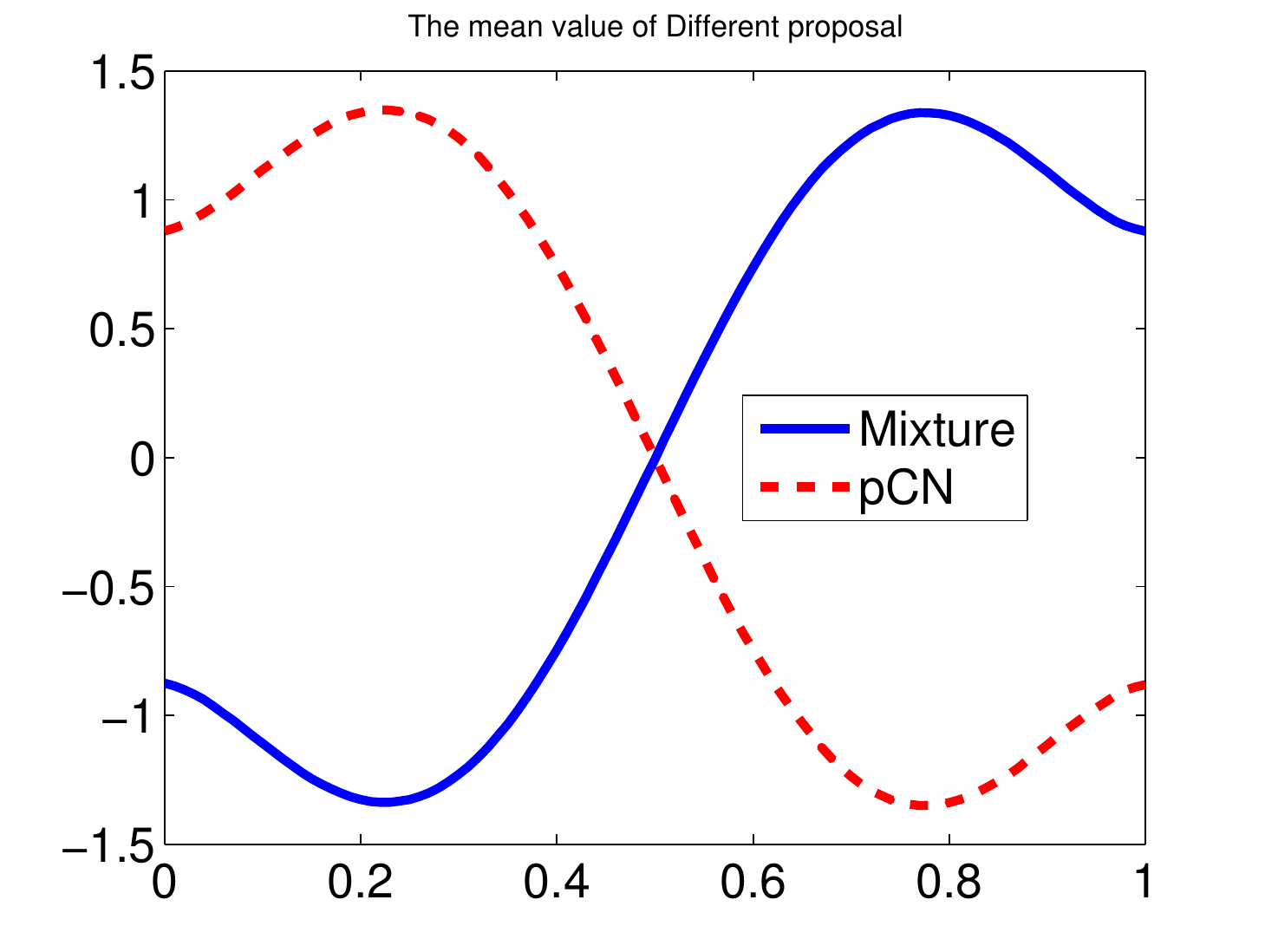}
\includegraphics[width=.5\textwidth]{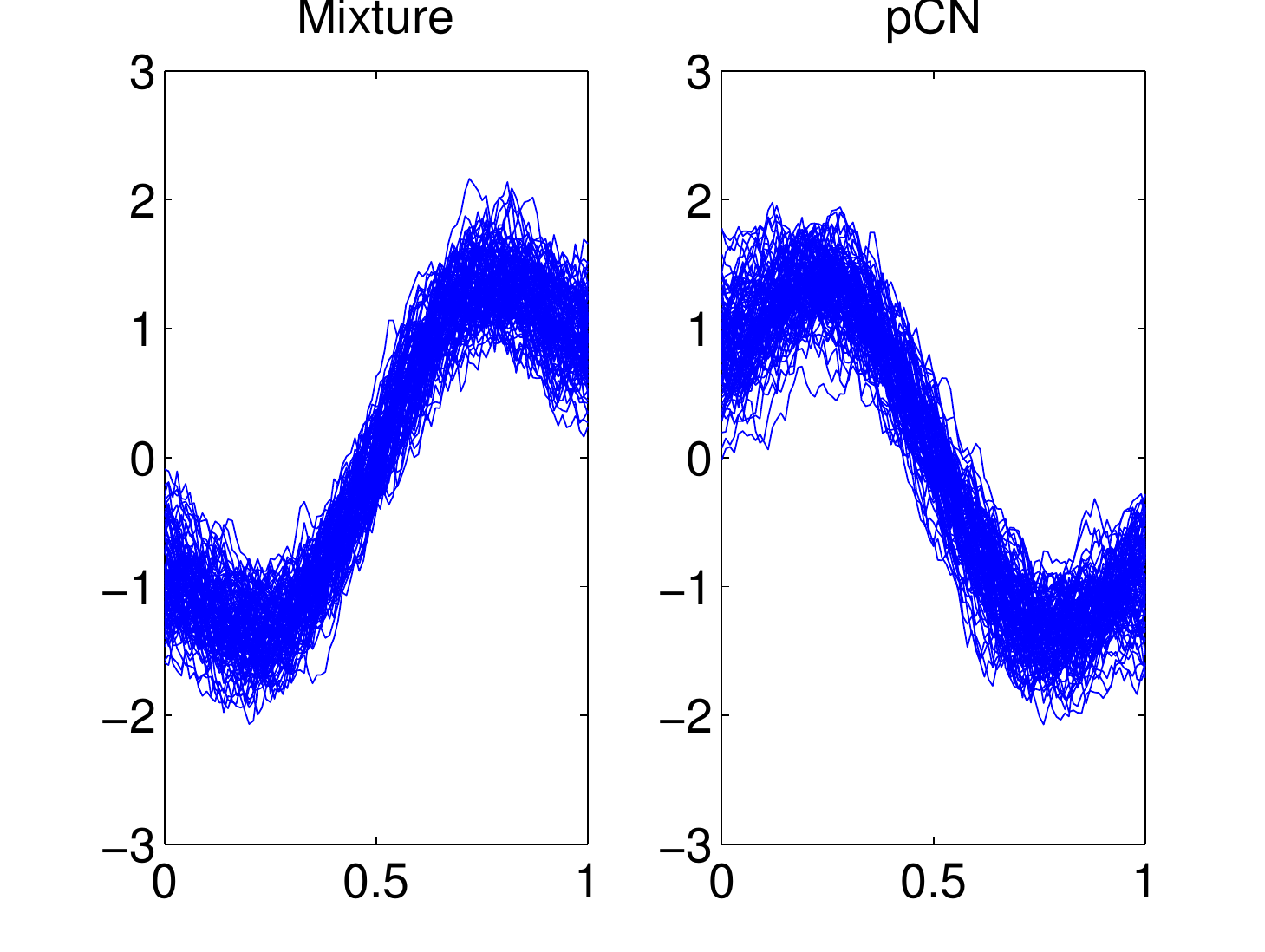}}
\caption{(for example 2) Left: the sample mean of the mixture-based IS method (solid) and that of the pCN method (dashed). Right: samples drawn by the mixture IS method and by the pCN method.}
\label{f:failed}
\end{figure}

As has been mentioned, the posterior distribution has two modes and we shall exam if the algorithms can capture both of them. 
In this respect, we randomly select 100 samples from the chain generated by each algorithm and plot them in Fig.~\ref{f:samples}.
We can see that the results of each algorithm can capture the two models of the posterior. 
Next we shall compare the efficiency of the four algorithms. As before, we first show the trace plots of the OMF for the four algorithms 
in Fig~\ref{f:trace_sin} and one can see that 
the results of the two adaptive methods and pCN all obtain fairly good mixing results, while
 the prior based IS seems to have a much slower mixing rate than the other three. 
 Fig.~\ref{f:acf_sin} (left) plots the ACF of the OMF as a function of lag and Fig.~\ref{f:acf_sin} (right) shows 
the lag 1 ACF for the unknown at each grid point.  
Both figures indicate that the adaptive IS with mixtures has the best performance in terms of ACF values. 
Fig.~\ref{f:acc-ess_sin} (left) plots the acceptance rate against the number of iterations, which shows that
the three IS algorithms perform very differently:
the prior based IS results in an acceptance rate less than $1\%$, the adaptive IS with one Gaussian results in a rate up to $17\%$,  
and  that of the adaptive IS with mixtures rises to around $80\%$ as the iteration proceeds. 
We compute the ESS of each dimension and show the results in Fig.~\ref{f:acc-ess_sin} (right), 
and we see that the ESS of the adaptive IS with mixtures is significantly higher than that of the other  three methods,
indicating that the adaptive IS with mixtures has a substantial advantage in this multimodal problem.    

Finally to understand the limitation of the proposed method, we test it on another bimodal likelihood function:
\[ \exp(-\Phi(u)) \propto \exp(-\frac12\|u-2\sin(2\pi t)\|_2^2)+\exp(-\frac12\|u+2\sin(2\pi t)\|_2^2).\]
We drew $5\times10^5$ samples with the mixture based IS algorithm and with the pCN.
We plot the mean of the samples drawn by both method in \ref{f:failed} (left),
and  in \ref{f:failed} (right), we plot 100 samples drawn by each algorithms.  
It can be seen from the figures that, both methods can only capture one mode of the posterior distribution,
indicating that, the problem becomes challenging for our method and the pCN when the  modes of the target distribution are far apart.

\subsection{Inverse heat conduction under model uncertainty}
Our last example is the inverse heat conduction~(IHC) problems, which consist of estimating temperature or heat flux density on an inaccessible boundary from a measured temperature history inside a solid.  
These problems have been studied over several decades due to their importance
in a variety of scientific and engineering applications~\cite{beck1985inverse}.
The IHC problems become 
nonlinear if the thermal properties are temperature dependent,
where the inversion is significantly more difficult than the linear ones. 
In this example we consider a one-dimensional heat conduction equation
\begin{equation}
\partialderiv{u}t=\partialderiv{}x\left[c(u)\partialderiv{u}x\right],
\label{e:heat}
\end{equation}
with initial $u(x,0)=u_o(x)$.
Here $x$ and $t$ are the spatial and temporal variable, $u(x,t)$ is the temperature, and $c(u)$ is the temperature dependent thermal conductivity,
and the length of the medium is $L$, all in dimensionless units. 
 We now assume that a heat flux is injected through the left boundary ($x=0$),  yielding a Neumann boundary condition: 
\[\partialderiv{}xu(0,t)=q(t).\]
The boundary condition~(BC) at $x=L$ is subject to uncertainty:
with probability $0.8$ it is
\begin{subequations}
 \begin{equation}
 \partialderiv{}xu(L,t) = 0,\label{e:neum}
 \end{equation}
 and with probability $0.2$ it is 
\begin{equation}
\partialderiv{}xu(L,t) = -u.\label{e:robin}
\end{equation}
\end{subequations}
The interpretation is that, the system has two possible states:  one with a perfectly insulted boundary at $x=L$, and the other
has heat diffusion at $x=L$.

Suppose that we place a temperature sensor in the medium ($x=x_\mathrm{s}$) and the goal is to infer the heat flux $q(t)$ for $t\in[0,\,T]$, from the temperature history measured by the sensor in the time interval. The schematic of this problem is shown in Fig.~\ref{f:heat}.
A similar problem without model uncertainty has been studied in \cite{li2014adaptive}. 

In the simulation, we let $L=1$, $T=2$, $c(u) = u^2+1$, $x_s=0.9$, and the initial condition be $u_o(x)=0$. 
The temperature is measured 50 times (equally spaced) and the error in each measurement is assumed to be an independent zero-mean Gaussian random variable 
with variance $0.1^2$. 
We assume the prior on $q(t)$ is a stationary zero-mean Gaussian process with a squared exponential covariance function:
\begin{equation} 
C(t,t') = \exp (-\frac{|t-t'|^2}{2d^2}), 
\end{equation}
where $d=0.3$.
The ''truth flux'' $q(t)$ is a realization of the prior (shown in Fig.~\ref{f:mean_heat}) and the data is simulated with the generated flux $q(t)$ and the boundary condition~\eqref{e:robin}.
In this problem the likelihood function becomes:
\[\frac{d\mu^y}{d\mu_0} = 0.8\exp(-\Phi_1(u))+0.2\exp(-\Phi_2(u)),\]
where $\Phi_1(u)$ corresponds to Eq.~\eqref{e:heat} with BC~\eqref{e:neum} and $\Phi_2(u)$ corresponds to Eq.~\eqref{e:heat} with BC~\eqref{e:robin}.

We draw samples from the posterior of $u(t)$ with the four MCMC schemes used in the previous examples. 
In each MCMC scheme, $1.5\times10^{5}$ draws are generated. 
In both of the adaptive IS methods, the parameters are updated after every 500 draws, and the adaptation is terminated after $10^5$ draws. To accelerate the convergence, we use tempering in the first 11 iterations (5,500 draws) with tempering parameter $\lambda =(i-1)/10 $ for $i=1...11$ .  In the RW-pCN, we choose $\beta =0.1$.

We first show the trace plot of the OMF in Fig.~\eqref{f:trace_heat}, and it is quite clear that the results of the two adaptive methods are better than those of the prior based IS and the pCN.
Because of the multimodality of the likelihood function, the posterior may have multiple modes, and to verify this, we apply the K-means method described in Section~4 to cluster the samples drawn by the four methods. 
The samples of the adaptive IS with mixtures can be successfully classified into two groups and we plotted the mean of each group in Fig.~\ref{f:mean_heat} (left), compared against the true heat flux.  
The K-means method, however, fails to separate the samples drawn by the other three methods, likely because the chains have not reached 
the target posterior distribution yet. 
We plot the means of the samples of the three methods in Fig.~\ref{f:mean_heat} (right).
Like the previous examples we show the ACF results of the four methods in Figs.~\ref{f:acf_heat},
and the acceptance rates and the ESS in Figs~\ref{f:acc-ess_heat}. In all the plots,  the adaptive IS with mixtures exhibits the best performance, followed by the IS with a single Gaussian. 

\begin{figure}
\centerline{\includegraphics[width=.7\textwidth]{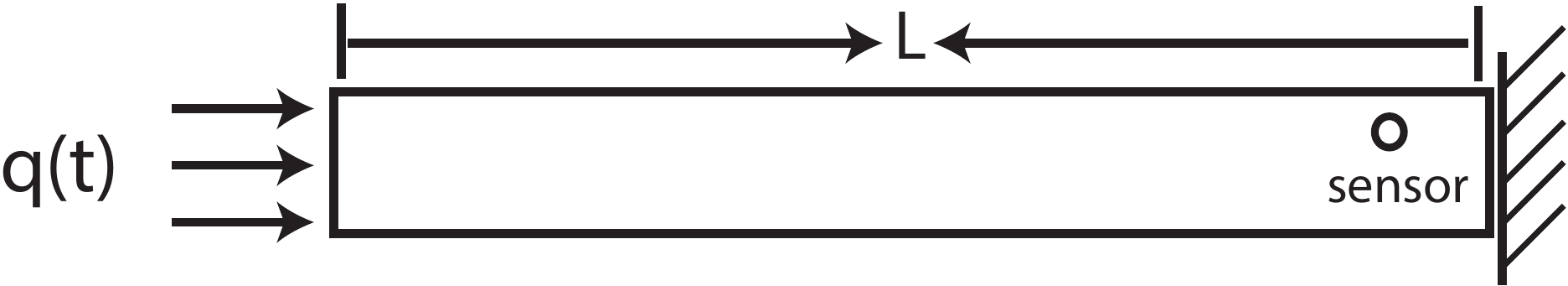}}
\caption{Schematic diagram of the IHC problem.}
\label{f:heat}
\end{figure}

\begin{figure}
\centerline{\includegraphics[width=.9\textwidth]{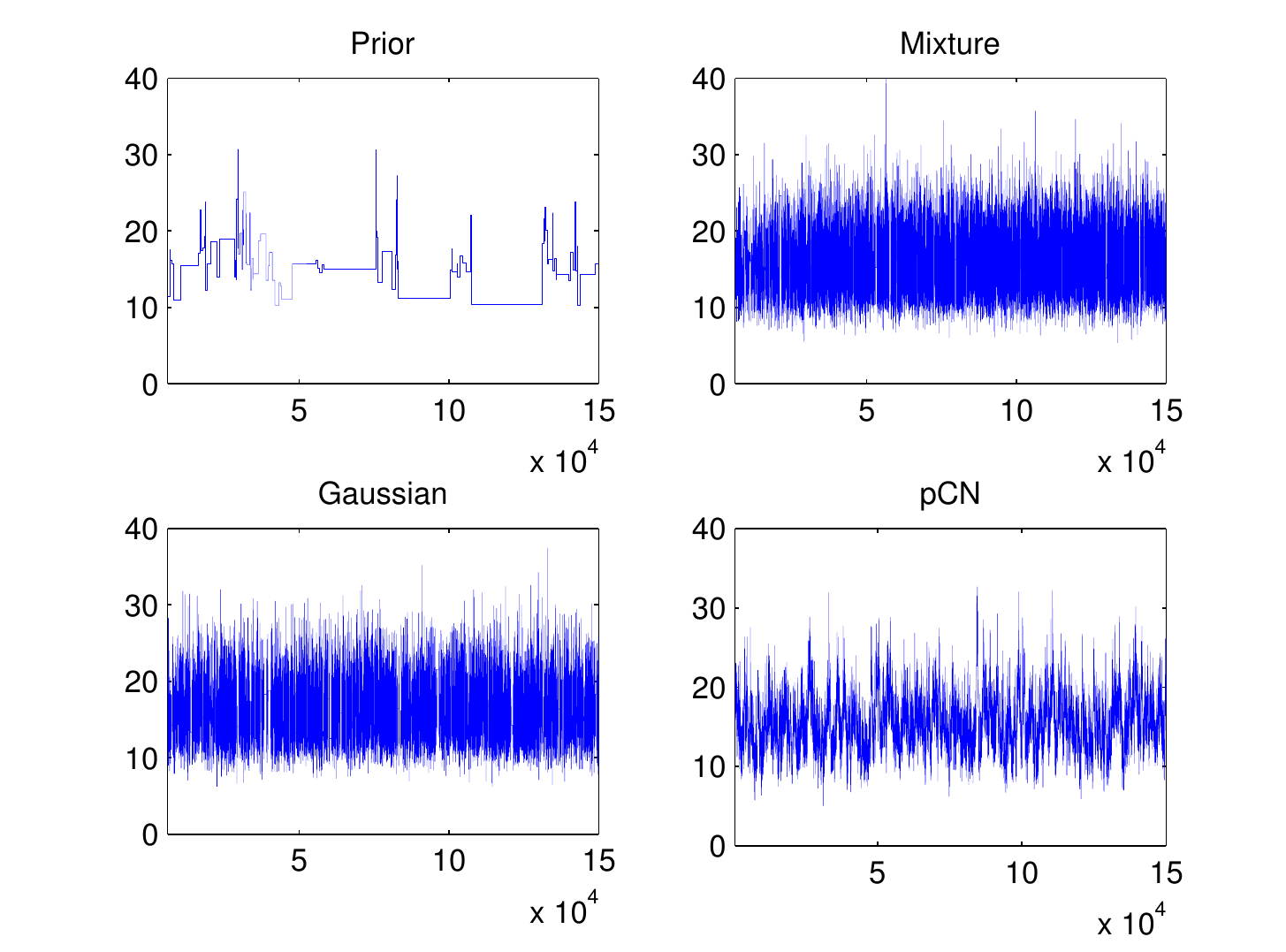}}
\caption{(for example 3) The trace plots of the OMF for the four different MCMC schemes.}
\label{f:trace_heat}
\end{figure}

\begin{figure}
\centerline{\includegraphics[width=.5\textwidth]{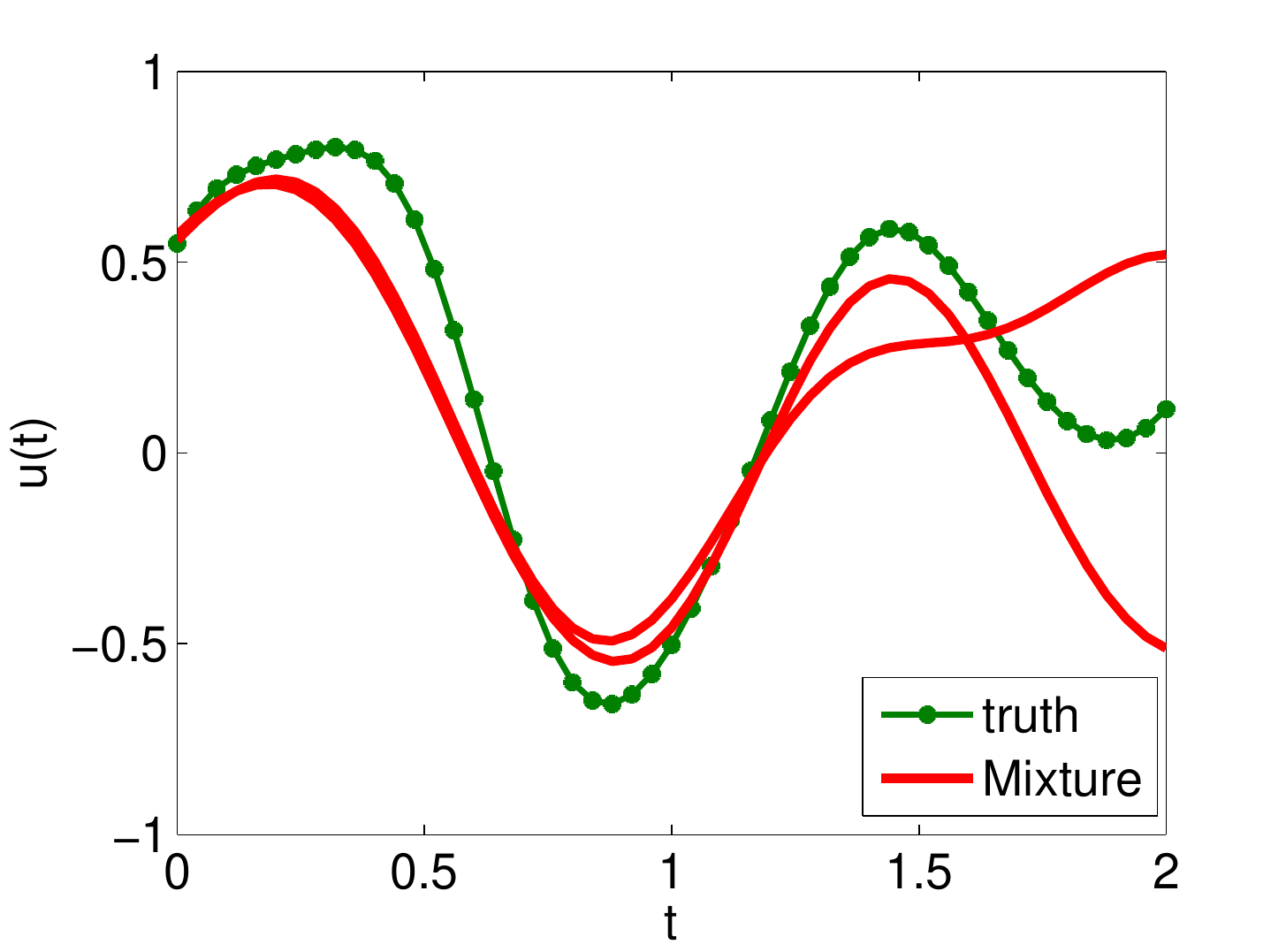}
\includegraphics[width=.5\textwidth]{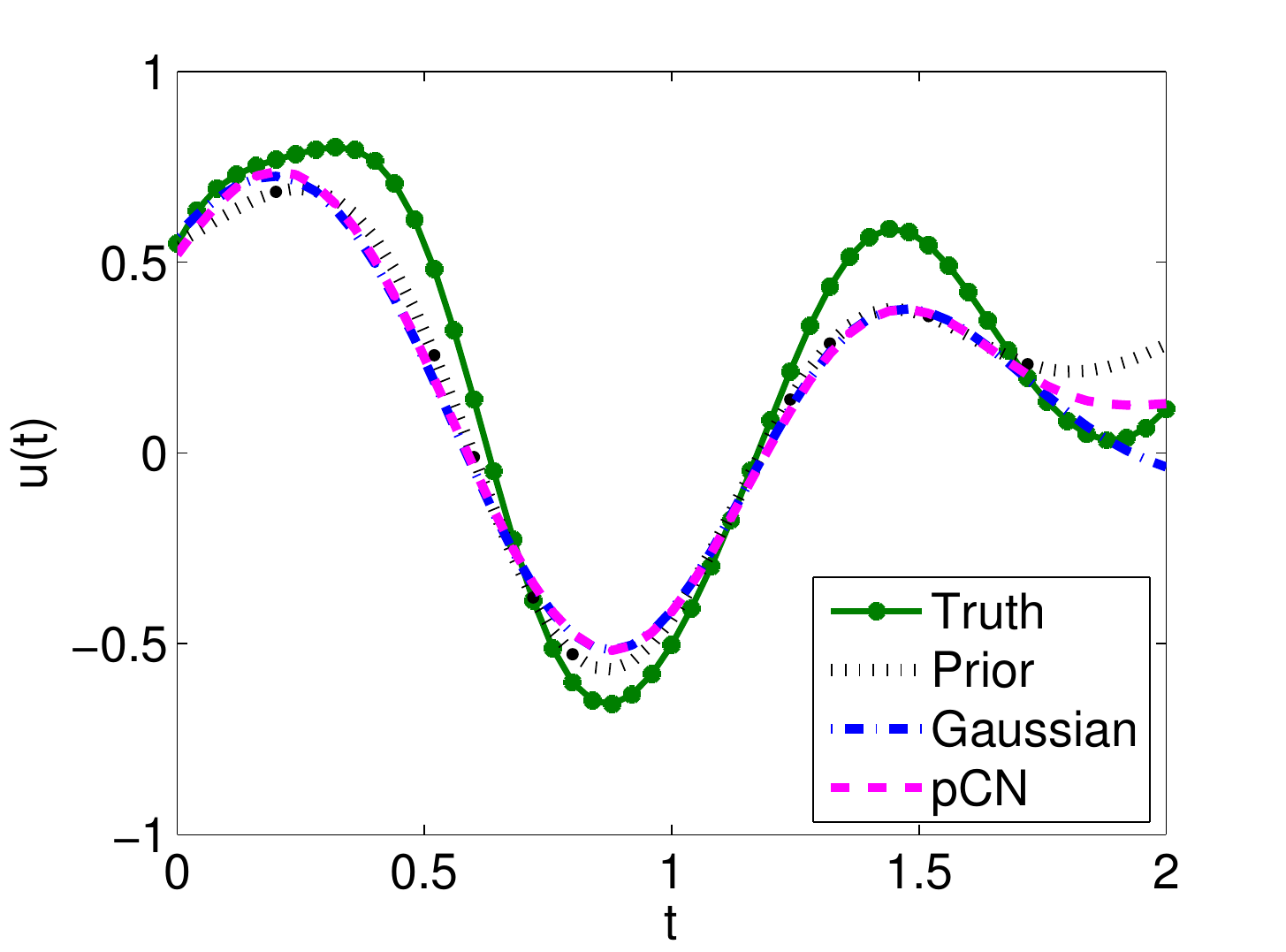}}
\caption{(for example 3)  Left: the means of the samples in each cluster of the chain drawn by the IS with mixtures and the true flux.
Right: the means of the samples drawn by the adaptive IS with a single Gaussian, the prior based IS and the RW-pCN.   }
\label{f:mean_heat}
\end{figure}

\begin{figure}
\centerline{\includegraphics[width=.5\textwidth]{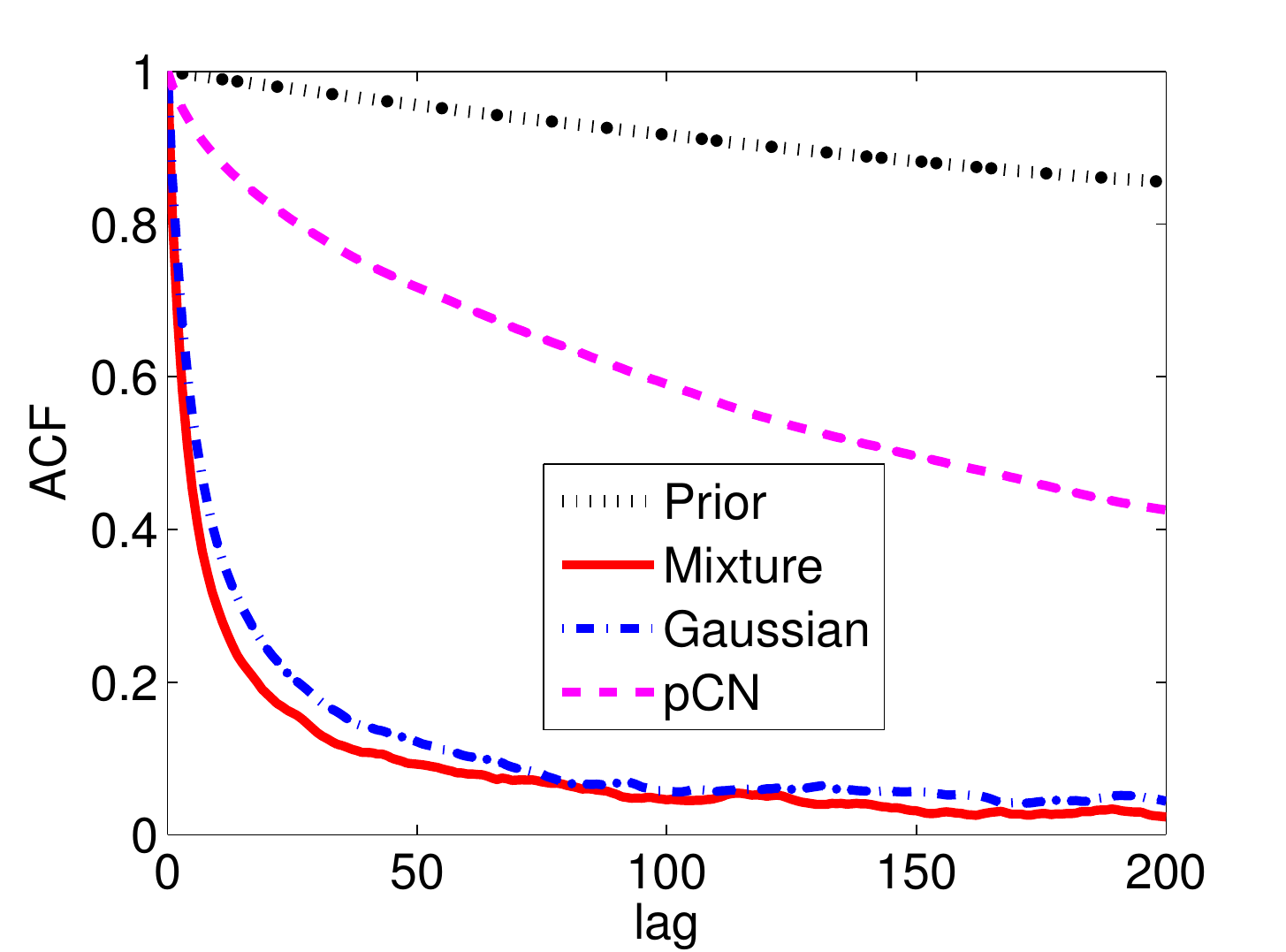}
\includegraphics[width=.5\textwidth]{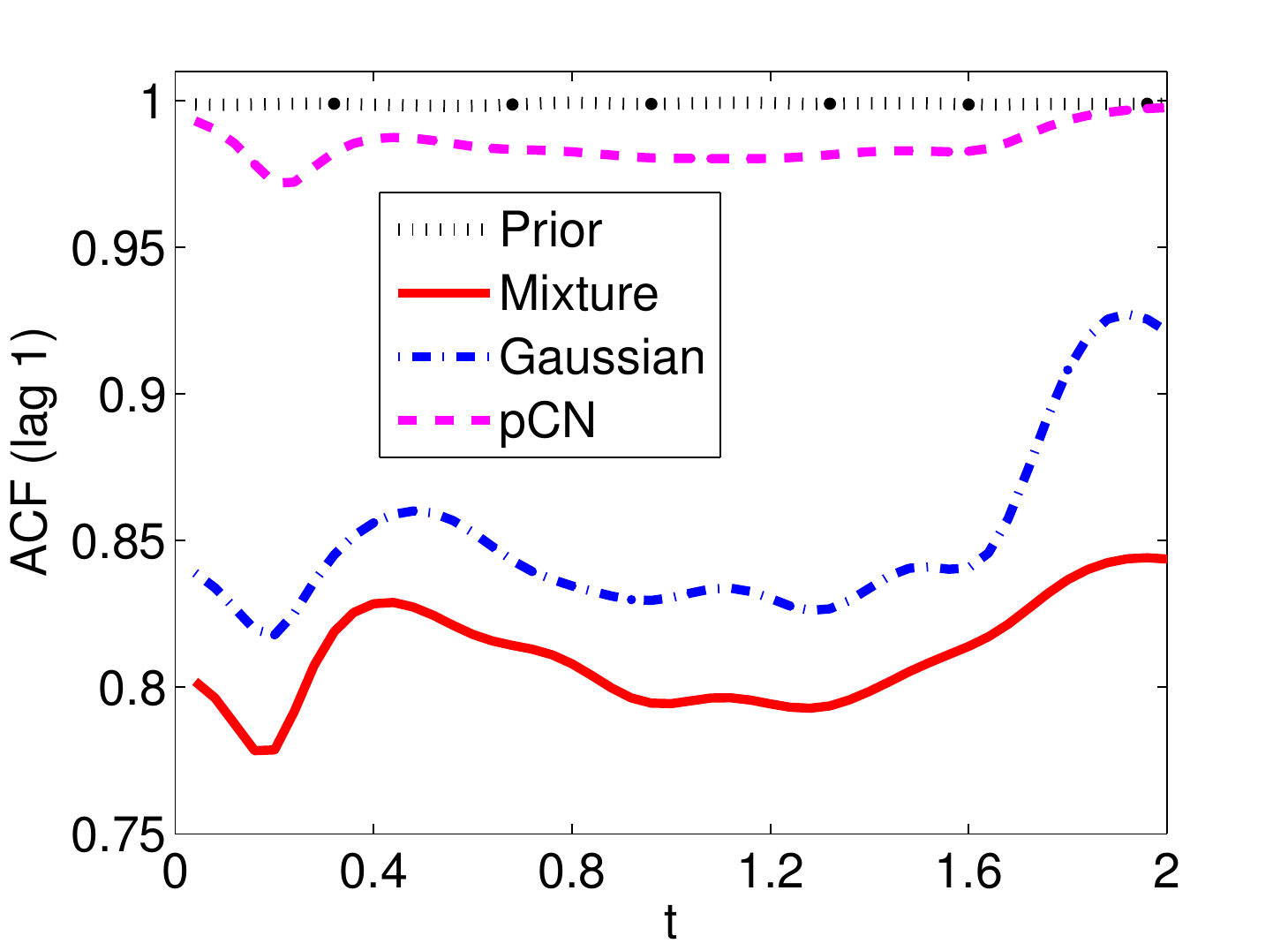}}
\caption{(for example 3) Autocorrelation functions (ACF) for the four different MCMC methods. Left: ACF of the OMF plotted as a function of lags. Right: the lag 1 ACF for 
$u$ at each grid point. }
\label{f:acf_heat}
\end{figure}

\begin{figure}
\centerline{\includegraphics[width=.5\textwidth]{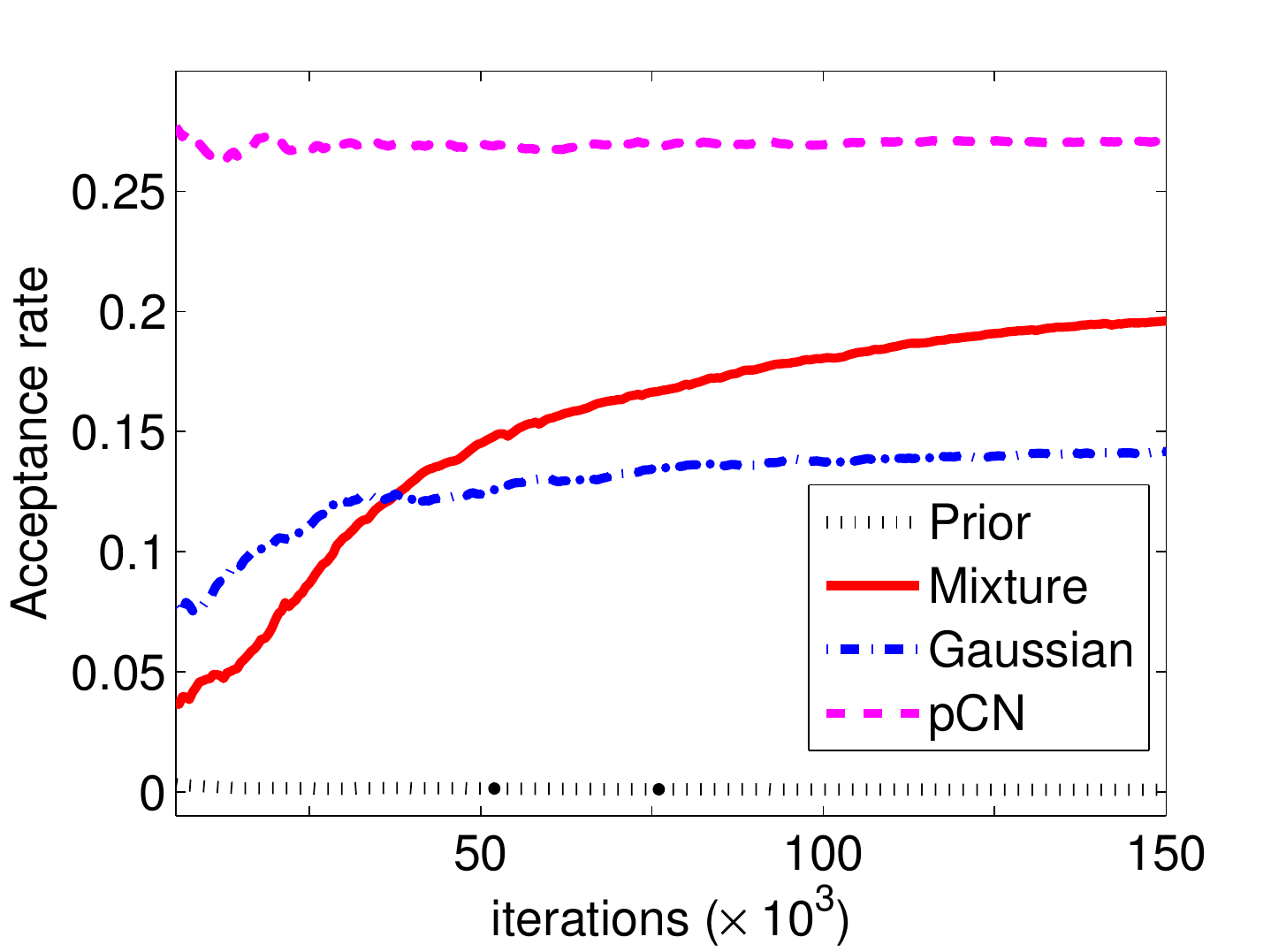}
\includegraphics[width=.5\textwidth]{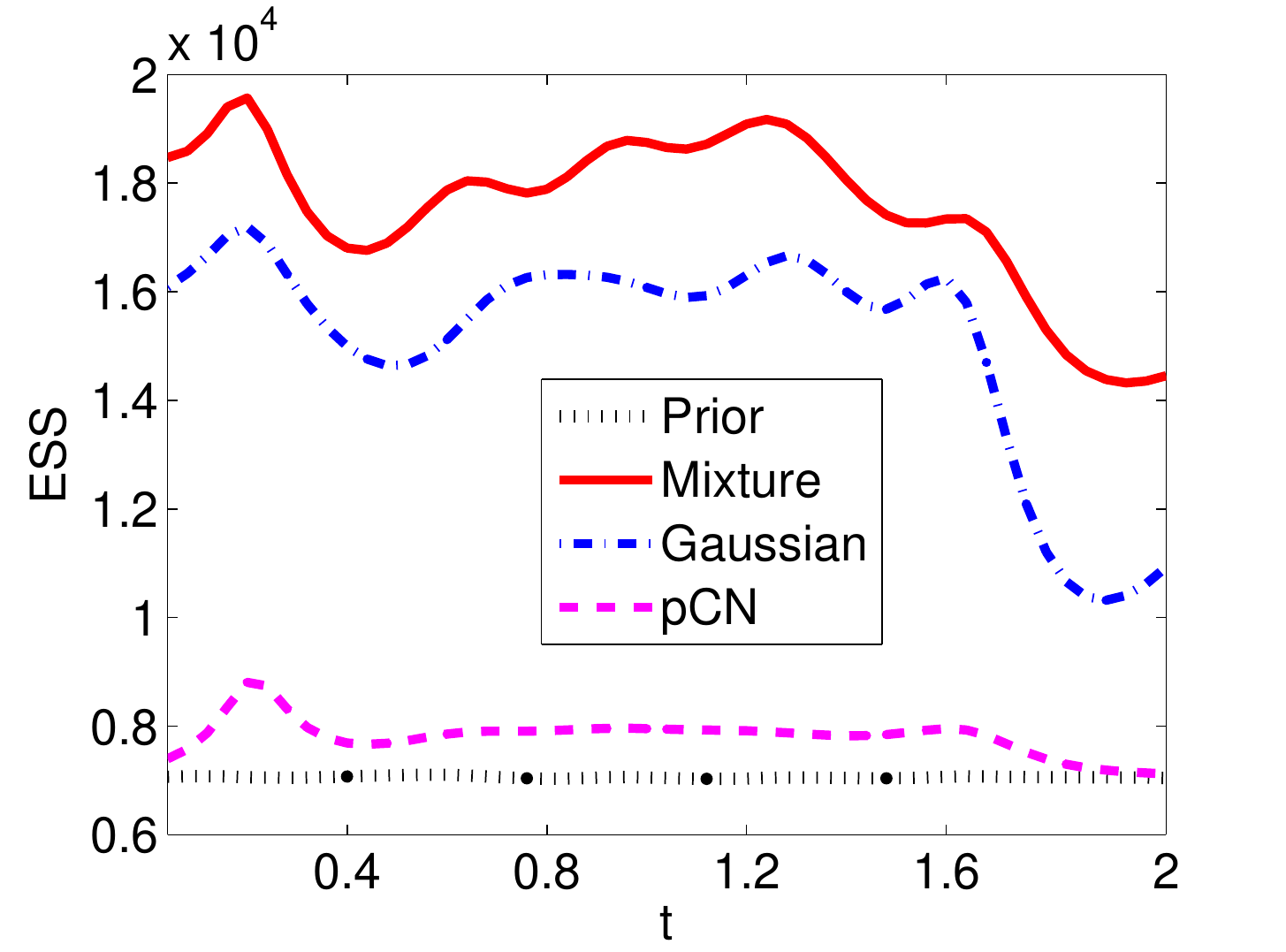}}
\caption{(for example 3) Left: the acceptance rate of the four MCMC schemes. Right: the ESS at each grid point.}\label{f:acc-ess_heat}
\end{figure}

\section{Conclusions}
In conclusion, we have presented an adaptive IS algorithm for infinite dimensional Bayesian inference. 
Namely we choose a Gaussian mixture with a particular parametrization as our proposal, and adaptively adjust the parameter values using sample history. 
We prove that the proposed algorithm is well-defined in function space and thus is dimension independent. 
We also develop an efficient algorithm based on clustering to compute the parameter values in each iteration.
We demonstrate the efficiency of the proposed method with numerical examples and in particular we show that it 
performs well for multimodal posteriors. We emphasize that, the proposed method is easy to implement, treating the problem as a black box model, 
and requiring no information on the mathematical structure of the forward model.

As has been demonstrated by the numerical examples, the mixture proposals can generally provide faster mixing rates  
than the single Gaussian, thanks to its higher flexibility. 
On the other hand, given that the Gaussian approximation is less complex computationally (without the clustering step), we recommend to use the single Gaussian approximation in problems where the posterior distributions do not deviate too much from a Gaussian measure,
and to use mixtures for strongly non-Gaussian posteriors. 

There are number of possible extensions of the work. First in this work we approximate the solution to the KLD minimization problem 
with clustering. It is possible that, if we can modify the standard EM algorithm and use it to solve the optimization problem directly, 
we may obtain better a mixture proposal in each iteration and improve the sampling efficiency. 
Secondly, the intrinsic dimensionality $K$ is of essential importance for our method, and
in the present work,  $K$ is determined rather heuristically.
Thus developments of more effective and theoretically justified methods certainly deserve further studies. 
Finally, the algorithm developed here is based on independence sampler, and we 
are also interested in extending the ideas to the development of adaptive infinite dimensional random walk algorithms.    
We plan to investigate these problems in the future.

\bibliographystyle{siam}
\bibliography{mcmc}

\end{document}